\newcommand{\R}{\mathbb{R}}
\newcommand{\J}{\mathcal{J}}
\newcommand{\dsum}{\displaystyle\sum}
\newtheorem{theorem}{Theorem}[section]
\newtheorem{example}{Example}[section]%
\newtheorem{remark}{Remark}[section]%
\newtheorem{lemma}[theorem]{Lemma}
\newtheorem{definition}{Definition}
\newtheorem{A}{Assumption}
\begin{document}


\title[Article Title]{A Partially Derivative-Free Proximal Method for Composite Multiobjective Optimization in the H\"{o}lder Setting}


\author*[1]{\fnm{V.S.} \sur{Amaral}}\email{vitalianoamaral@ufpi.edu.br}

\author[2]{\fnm{P.B.} \sur{Assunção}}\email{pedro.filho@ifg.edu.br}
\equalcont{These authors contributed equally to this work.}

\author[3]{\fnm{D.R.} \sur{Souza}}\email{souzadr@ime.unicamp.br}
\equalcont{These authors contributed equally to this work.}

\affil*[1]{\orgdiv{Department of Mathematics}, \orgname{Federal University of Piauí - UFPI}, \orgaddress{\street{Avenida Petrônio Portela}, \city{Teresina}, \postcode{64049-550}, \state{PI}, \country{Brasil}}}

\affil[2]{\orgdiv{Department of academic areas}, \orgname{Federal Institute of Education, Science and Technology of Goiás - IFG}, \orgaddress{\street{Rua 64 Parque Lago}, \city{Formosa}, \postcode{73813-816}, \state{GO}, \country{Brasil}}}

\affil[3]{\orgdiv{Department of applied mathematics}, \orgname{Institute of Mathematics, Statistics and Scientific Computing - UNICAMP}, \orgaddress{\street{Rua Sérgio Buarque de Holanda}, \city{Campinas}, \postcode{13083-859}, \state{SP}, \country{Brasil}}}


\abstract{This paper presents an algorithm for solving multiobjective optimization problems involving composite functions, where we minimize a quadratic model that approximates $F(x) - F(x^k)$ and that can be derivative-free. We establish theoretical assumptions about the component functions of the composition and provide comprehensive convergence and complexity analysis. Specifically, we prove that the proposed method converges to a weakly $\varepsilon$-approximate Pareto point in at most $\mathcal{O}\left(\varepsilon^{-\frac{\beta+1}{\beta}}\right)$ iterations, where $\beta$ denotes the H\"{o}lder exponent of the gradient. The algorithm incorporates gradient approximations and a scaling matrix $B_k$ to achieve an optimal balance between computational accuracy and efficiency. 
Numerical experiments on a collection of benchmark problems are presented, illustrating the practical behavior of the proposed approach and its competitiveness with existing composite algorithms.
}

\keywords{Multiobjective optimization; Pareto optimality; Complexity analysis; Robust optimization.}

\pacs[MSC Classification]{90C29, 65K05, 90C30}

\maketitle

\section{Introduction}\label{sec1}

Multiobjective optimization is characterized by the simultaneous minimization (or maximization) of several objective functions. In general, these objectives are conflicting, which prevents the existence of a single point capable of optimizing all objective functions at once. Therefore, the concept of Pareto optimality is employed to characterize solutions of multiobjective problems. Problems that deal with multiple objectives simultaneously are common in various fields, including engineering, industrial processes, telecommunications, and game theory, as shown in the works \cite{BhaskarGuptaRay, BeltonStewart2002, Miettinen1998, Stewart2008, Martinez-MunozMartiYepes2025, Rgamemontc}.

There are several iterative approaches for solving multiobjective optimization (MO) problems. To the best of our knowledge, there are three principal approaches: scalarization \cite{Eichfelder2008}, heuristic \cite{laumanns2002combining}, and direct methods, the latter also being considered an extension of scalar-valued methods to multiobjective (vector) optimization. This approach was initiated in \cite{Fliege2000}, where the authors extended the steepest descent method; see also \cite{fliegeanddrummond2009}. Over the past two decades, various direct methods have sought to extend methods such as Newton \cite{chuong2013newton,fliegeanddrummond2009,mauricionewtonvetorial,wang2019extended}, quasi-Newton \cite{ansary,math8040616,Mahdavi-AmiriSadaghiani2020,Morovati2017,POVALEJ2014765,QuGohChan2011,QU2014503, Prudente2022, Prudente2024}, conjugate gradient \cite{Goncalves2019,cgvo}, projected gradient \cite{luis-jef-yun,fazzio2019convergence,fukuda2011convergence,fukuda2013inexact,mauricio&iusem}, proximal methods \cite{bonnel2005proximal,ceng2010hybrid,ceng2007approximate,chuong2011generalized,chuong2011hybrid, fukuda2019, fukuda2023, Nonmonotone-Peng2025, CHEN2025116422, Bello-Cruz2025}, (generalized) conditional gradient \cite{Assuncao2021, Assuncao2023, Goncalves2025}, and Newton-type proximal gradient \cite{Ansary04052023}.

In many practical optimization problems, direct access to the derivatives of the functions involved is often unavailable, which makes their computation challenging. Additionally, evaluating the objective function and its derivatives can be computationally expensive. As a result, there is growing interest in methods that offer low computational cost while maintaining good worst-case complexity in terms of function evaluations and iterations. In this context, research on derivative-free methods has advanced significantly for scalar problems.  For further details, see \cite{Amaral2025, Grapiglia2023, Grapiglia2024} and the references therein.

The objective of this work is to develop a partially derivative-free method for solving the nonsmooth multiobjective optimization problem defined as follows:
\begin{equation}\label{P1}
\min_{x \in \mathbb{R}^n}    F(x)
\end{equation}
where $F : \mathbb{R}^n \to \mathbb{R}^m$ is a vector-valued function with $F(x) = (F_1(x), \ldots, F_m(x))$, and each component is given by
\begin{equation*}\label{composite}
    F_j \coloneqq f_j + h_j, \quad  j \in \mathcal{J} \coloneqq \{1, \ldots, m\},
\end{equation*}
with $f_j: \mathbb{R}^n \to \mathbb{R}$ differentiable and $h_j : \mathbb{R}^n \to \mathbb{R} \cup \{+\infty\}$ convex, but not necessarily differentiable. The problem of minimizing a function expressed as the sum of two functions has several applications; see \cite[Section 5.2]{fukuda2019}. Let $\mathcal{C} \subset \mathbb{R}^n$ be a compact and convex set such that, for all $j \in \mathcal{J}$, $h_j(x) = 0$ for all $x \in \mathcal{C}$ and $h_j(x) = +\infty$ otherwise; then $h_j$ is the indicator function of $\mathcal{C}$. Thus, the problem in \eqref{P1} is equivalent to solving the following problem:
\[\min_{x \in \mathcal{C}}    (f_1(x), \ldots, f_m(x)).\]
In addition, the problem in \eqref{P1} can be applied to robust optimization problems, which involve uncertainties in their parameters and essentially consist of solving the problem in the worst-case scenario; see \cite{Chen2019, fukuda2019} and their references.

Recently, composite problems such as in \eqref{P1} have been addressed using various methods, including proximal gradient \cite{fukuda2019, Nonmonotone-Peng2025, Bello-Cruz2025, CHEN2025116422}, generalized conditional gradient \cite{Assuncao2023}, and Newton-type proximal gradient  \cite{Ansary04052023}. In all references, the  component functions $f_j : \mathbb{R}^n \to \mathbb{R}$ for $j \in \mathcal{J}$ are continuously differentiable.  
In \cite{fukuda2023, Assuncao2023}, the authors further assumed that $f_j$ have Lipschitz continuous gradients, while the functions $h_j : \mathbb{R}^n \to \mathbb{R} \cup \{+\infty\}$ are proper and convex. Notably, under the assumption that $h_j$ are lower semicontinuous, Assunção et al. \cite{Assuncao2023} proposed a generalized version of the conditional gradient method (Frank-Wolfe) for multiobjective composite optimization.  
In \cite{Ansary04052023, Bello-Cruz2025}, the functions $f_j$ are assumed to be convex, and $h_j$ are considered continuous and convex; Bello-Cruz \cite{Bello-Cruz2025} additionally assumed that $h_j$ are proper. In contrast, \cite{CHEN2025116422, Nonmonotone-Peng2025} addressed cases where $f_j$ are continuously differentiable but not necessarily convex, while $h_j$ are convex (and proper, lower semicontinuous in \cite{CHEN2025116422}).

Given an iteration $x^k$, our approach finds $x^{k+1}$ such that $F_j(x^{k+1})$ is sufficiently smaller than $F_j(x^k)$ for all $j \in \mathcal{J}$. As discussed by Calderón et al.~\cite{Lizet}, this can be done by defining an approximate model for each $F_j(x) - F_j(x^k)$ around $x^k$ and approximately minimizing the maximum of these models. In our case, to approximate
	\[\Phi_{x^k}(x) + \frac{\sigma_k}{2}\|x-x^k\|^2\]
where 
\begin{equation}\label{Phi}
    \Phi_{x^k}(x) \coloneqq \max_{j \in \mathcal{J}} \big[ \langle g_{f_j}(x^k,\lambda_k) + \tfrac{1}{2} B^k_j(x-x^k), \, x-x^k \rangle + h_j(x) - h_j(x^k) \big],
\end{equation}
$g_{f_j}(x^k,\lambda_k)$ is an approximation of $\nabla f_j(x^k)$ and $B_j^k$ is a symmetric positive semidefinite matrix. To our knowledge, Fliege and Svaiter \cite{Fliege2000} were the first to use this approach to solve multiobjective optimization problems.

One of the novelties of our approach lies in the extension of the derivative-free method described by Grapiglia in \cite{Grapiglia2024} to multiobjective optimization. To the best of our knowledge, our work is one of the pioneers in applying this type of method to multiobjective optimization. In \cite{Grapiglia2024}, Grapiglia proposed a quadratic regularization method to minimize a function \( f:\mathbb{R}^n \to \mathbb{R} \), where he used finite-difference approximations for the gradients and proved that, when the objective function is bounded below and has Lipschitz gradients, the method requires at most \(\mathcal{O}(\epsilon^{-2})\) iterations to generate an approximate stationary point with \(\epsilon\) accuracy.

In our work, we assume a H\"{o}lder-Lipschitz composition for the gradient of $f_j$ and convexity only in $h_j$. We show that the proposed method performs at most 
$
\mathcal{O}\left(\epsilon^{- \frac{\beta+1}{\beta}}\right)
$ iterations to find an $\epsilon$-approximate Pareto critical point for the problem in \eqref{P1}, where $\beta = \min \{\beta_j \mid j=1,\dots,m\}$ and $\beta_j$ are the exponents of the assumed H\"{o}lder conditions. Our complexity bound is order-wise consistent with the bound established by Calderón et al.~\cite{Lizet} for the first-order version of their method of order $p$, as well as with the complexity order obtained by Grapiglia et al.~\cite{PinheiroGrapiglia2024}. Furthermore, when $\beta_1 = \dots = \beta_m = 1$, i.e., when only Lipschitz conditions are assumed for the gradients of $f_j$, our bound becomes  
$
O\left(\epsilon^{-2}\right),
$ which is consistent with previously established bounds; see, for example, Grapiglia et al.~\cite{Grapiglia2015}. In the case $m = 1$, our complexity order coincides with that obtained by Martinez~\cite{Martinez2017} for the first-order version of their method of order $p$. Finally, numerical experiments on robust biobjective problems with Lipschitz and H\"{o}lder-gradient components are reported. These experiments illustrate the method’s ability to approximate the Pareto front under different levels of uncertainty and to consistently recover diverse solutions, even in challenging settings where only H\"{o}lder continuity is assumed for the gradients.

The remainder of the article is structured as follows. First, in Section \ref{preli}, we present the notation and important results for understanding the work. Next, in Section \ref{Sec:LocalAnalysis}, we provide a detailed description of the proposed proximal point method and demonstrate its effectiveness. In Section \ref{seccomplex}, we analyze the iteration complexity of the method in the worst-case scenario. To further demonstrate the applicability of our approach, we provide numerical experiments in Section \ref{sec5}. Finally, in Section \ref{conlusions}, we present the concluding remarks of the paper.

\section{Notations and Auxiliary Results}\label{preli}

We now introduce some notations, definitions, and results that will be used throughout the paper. We denote by $ \mathbb{N} \coloneqq \{0, 1, 2, \ldots\} $ the set of nonnegative integers, and by $ \mathbb{N}^* \coloneqq \{1, 2, 3, \ldots\} $ the set of positive integers. The sets $ \mathbb{R} $, 
 $\mathbb{R}_+ $, and $ \mathbb{R}_{++} $ represent the set of real numbers, the set of nonnegative real numbers, and the set of positive real numbers, respectively. Let $ \mathcal{J} = \{1, \ldots, m\} $, we define:
\[\mathbb{R}^m_+ \coloneqq \{ u \in \mathbb{R}^m \mid u_j \geq 0,\ \forall j \in \mathcal{J} \}
\quad \text{and} \quad
\mathbb{R}^m_{++} \coloneqq \{ u \in \mathbb{R}^m \mid u_j > 0,\ \forall j \in \mathcal{J} \}.\]
If $u, v \in {\mathbb R}^{m}$, then $v \leq u$ means that $u-v \in {\mathbb R}^{m}_{+}$, and if $v< u$ then $u-v \in {\mathbb R}^{m}_{++}$. 
The symbol $\langle \cdot, \cdot \rangle$ is the usual inner product in $\R^n$ and $\| \cdot \|$ denotes the Euclidean norm. 

\begin{definition}
	We say that a point $x^* \in \mathbb{R}^n$ is a Pareto optimal point for the Problem \eqref{P1} if there does not exist $x \in \mathbb{R}^n$ such that
	$$F(x) \leq F(x^*)\mbox{ and } F(x) \neq F(x^*).$$
\end{definition}

The Fritz John conditions for multiobjective optimization were extended in \cite[Section 3.2.1 (First-Order Conditions)]{Miettinen1998}. If $x^*$ is Pareto optimal for the Problem \eqref{P1}, then there exists $\gamma\in\mathbb{R}^m_{+}$ with $\sum_{j=1}^m\gamma_j=1$ such that
\begin{equation}\label{stationary}
	0\in \sum_{j=1}^m\gamma_j (\nabla f_j(x^*)+\partial  h_j(x^*)),
\end{equation}
where $\partial h_j$ is the subdifferential of the convex function $h_j:\mathbb{R}^n\to \mathbb{R}$ defined by
\begin{equation*}\label{epsilonsub}
	\partial h_j(\bar{x})=\{y\in\mathbb{R}^n~:~ h_j(z)\geq h_j(\bar{x})+\langle y, z-\bar{x}\rangle, \quad \forall z\in\mathbb{R}^n\}.
\end{equation*}

\begin{definition}\label{indicadora}
For a convex, closed and, non-empty set $\mathcal{C},$ we use $\delta^\mathcal{C}:\R^n\to\R^m$ as its indicator(vector) function, with $\delta^\mathcal{C}(x) = (\delta_1^\mathcal{C}(x), \ldots, \delta_m^\mathcal{C}(x))$ where,
\begin{equation*}\label{ed:IF}
\delta_j^\mathcal{C}(x)\!=\!\left\{\hspace{-5pt}
\begin{tabular}{ll}
 $0$, &\hspace{-12pt} if $x \in \mathcal{C}$,\\
 $+\infty$, & \hspace{-12pt} otherwise,
\end{tabular}\right. \,\,j\in \mathcal{J}.
\end{equation*}
\end{definition}

It is important to note that  $\delta_j^\mathcal{C}(x)$ is a convex, proper, and lower semicontinuous function. Another important fact is that $\partial \delta_j^\mathcal{C}(x)(\cdot)=N_{\mathcal{C}}(\cdot),$ where 
$$
N_{\mathcal{C}}(x)\coloneqq\{z\in \mathbb{R}^n~:~ \langle z, y-x\rangle \leq 0, \quad \forall\, y\in \mathcal{C}\}.
$$
Let $\varphi : \mathbb{R}^n \to \mathbb{R}$ be a differentiable function. We say that $\nabla \varphi$ is H\"{o}lder continuous with constant $L > 0$ if there exists $\theta \in (0,1]$ such that
$$
\| \nabla \varphi(y) - \nabla \varphi(x) \| \leq L \| y - x \|^\theta, \quad \forall x, y \in \mathbb{R}^n.
$$
If $\theta = 1$, we say that $\nabla \varphi$ is Lipschitz continuous with constant $L > 0$.

\section{Proposed Method}\label{Sec:LocalAnalysis}

For the development of the method proposed in this work, for each $j \in \mathcal{J}$, we consider the function 
\begin{equation}\label{gfj}
	g_{f_j}: \mathbb{R}^n \times [0,1] \to \mathbb{R}^n \quad \text{where} \quad \lim_{\lambda \to 0} g_{f_j}(x, \lambda) = \nabla f_j(x).
\end{equation}
We observe that the function in \eqref{gfj} can be defined as $g_{f_j} = \nabla f_j$. However, when the calculation of the gradient of $f_j$ is prohibitive, this definition is not recommended. In this case, it is preferable to define $g_{f_j}$ as an approximation of the gradient $\nabla f_j$ that is less expensive to obtain. Three possible definitions of $g_{f_j}$ are presented in the following remark.

\begin{remark}\label{diff}
If $f_j$ is differentiable and $e_i,\,\,i=1,\ldots,n$ denotes the $i$-th canonical vector of $\R^n$, then $g_{f_j}$ can be defined in one of the following ways:
\begin{itemize}
\item[(i)] Forward difference: $$g_{f_j}(x,\lambda)=	
\bigg[\dfrac{ f_j(x+\lambda e_1)- f_j(x)}{\lambda},\cdots,\dfrac{ f_j(x+\lambda e_n)- f_j(x)}{\lambda}\bigg];$$
\item[(ii)] Backward difference: $$g_{f_j}(x,\lambda)=	
\bigg[\dfrac{ f_j(x)-f_j(x-\lambda e_1)}{\lambda},\cdots,\dfrac{f_j(x)- f_j(x-\lambda e_n)}{\lambda}\bigg];$$
\item[(iii)] Central difference: $$g_{f_j}(x,\lambda)=	
\bigg[\dfrac{ f_j(x+\lambda e_1)-f_j(x-\lambda e_1)}{2\lambda},\cdots,\dfrac{f_j(x+\lambda e_n)- f_j(x-\lambda e_n)}{2\lambda}\bigg].$$
    \end{itemize}

\end{remark}

It is important to note that, since $f_j$ is differentiable at $x$, each of the approximations defined above converges to the gradient $\nabla f_j(x)$ as $\lambda \to 0$. More precisely, we have
\(
\lim_{\lambda \to 0} g_{f_j}(x, \lambda) = \nabla f_j(x),
\)
in all three cases (forward, backward, or central differences).

\subsection{The Method}\label{sec.01cap2}
 In this subsection, we present a partially derivative-free method, which can be made fully derivative-free, for solving the Problem  \eqref{P1}.

\begin{algorithm}[!ht]
	\caption{\textsc{ Partially Derivative-Free Proximal Method - PDFPM}}\label{algfree}
	\medskip
	
Let $x^0\in\R^n$, $\alpha, \epsilon \in(0,1)$, $\sigma_0\geq 1$ , and $B^0_j\in\R^{n\times n}$ for all $j \in \J$. Initialize $k\leftarrow 0$.
\begin{description}
	\item[ \textbf{Step 1.}]  For 
	\begin{equation*}\label{d1}
		0<\lambda_k\leq \dfrac{\epsilon}{\sigma_k\sqrt{n}}
	\end{equation*}
	and each $j\in\mathcal{J}$, compute $g_{f_j}(x^k,\lambda_k)$ (see Remark~\ref{diff}).
	
	\item[ \textbf{Step 2.}]  Find $\bar{x}^k\in\R^n$ solution of the following subproblem: 
    \begin{equation}\label{probminmax}
		\min_{x\in\R^n}\Phi_{x^k}(x)+\frac{\sigma_k}{2}\|x-x^k\|^2,
	\end{equation} where $\Phi_{x^k}(x)$ is defined in \eqref{Phi}.
	
	\item[ \textbf{Step 3.}] If $\sigma_k\|\bar{x}^k-x^k\|\geq \epsilon$, go to \textit{\textbf{Step~4}}. Otherwise, STOP and declare $\bar{x}^{k}$ an acceptable solution. 
	
	\item[ \textbf{Step 4.}] If	
	\begin{equation}\label{eq.te}
		F_j(\bar{x}^k)\leq F_j(x^k)-\dfrac{\alpha\epsilon^2}{2\sigma_k},\quad \forall~j\in\mathcal{J},			
	\end{equation}
	holds, set $x^{k+1} \coloneqq \bar{x}^k$, $\sigma_{k+1}\coloneqq\sigma_k$ and choose a matrix $0 \leq B_j^{k+1} \in \R^{n\times n}$, $k \leftarrow k + 1$, and go to \textbf{\textit{Step~1}}. Otherwise, define $\sigma_k\leftarrow 2\sigma_k$ and go to \textbf{\textit{Step~1}}.
\end{description}
\end{algorithm}

The parameter $\alpha$ controls the reduction level of $F_j$ in \eqref{eq.te}, where values close to $1$ result in a more aggressive reduction, which may cause the penalty parameter $\sigma_k$ to grow unnecessarily. Therefore, it is crucial to tune $\alpha$ carefully. The descent test in \eqref{eq.te} shows that small values of $\sigma_k$ tend to produce larger steps, which can speed up the convergence of the method.  As will be shown in Remark~\ref{cparada}, the criterion established in Step~3 of Algorithm~\ref{algfree} is appropriate.

The remainder of this subsection is devoted to establishing the well-definedness of the steps of Algorithm~\ref{algfree}. To ensure the correct formulation of Step~2, we consider the following function:
$$\Psi_{\sigma_k}^{x^k}(x)=\Phi_{x^k}(x)+\dfrac{\sigma_k}{2}\|x-x^k\|^2$$
with $B^k_j$ symmetric positive semidefinite matrices. Since $h_j$ is convex, then there exists a linear function $\overline{h}_j(x)=\langle a,x\rangle+b,\,a,x\in\R^n,\,b\in\R$ such that $h_j(x)\geq \overline{h}_j(x)$ for all $x\in\R^n$, hence we have that $$\displaystyle\lim_{\|x\|\to +\infty}\Psi_{\sigma_k}^{x^k}(x)=+\infty,$$ in other words, $\Psi_\sigma^{x^k}(x)$ is coercive. Furthermore, it is easy to see that the function $\Psi_{\sigma_k}^{x^k}$ is strongly convex, and, as a consequence, the Problem \eqref{probminmax} has a unique solution, ensuring the well-definedness of Step~2 in Algorithm~\ref{algfree}.

It is important to note that the matrix $B_j^k$ used in Step~2 of Algorithm~\ref{algfree} does not need to be an approximation of the Hessian $\nabla^2 f_j(x^k)$, as the expression in \eqref{Phi} might suggest. It can even be defined as the null matrix, relying only on first-order information, which is useful when the Hessian of $f_j$ is expensive to compute. However, second-order information, when available, can improve the efficiency of the step. The flexibility in the choice of $B_j^k$ allows us to use the Hessian, a cheap approximation (e.g., quasi-Newton), or the null matrix, as long as it is uniformly bounded, as presented in Assumption~\ref{A2}.

Next, we analyze the well-definedness of Step~4. For this purpose, we consider the following assumptions.

\begin{A}\label{A2}
	There exists $\overline{B}\geq 0$ such that $\|B_j^k\|\leq\overline{B}$ for all iterations $k$ of Algorithm~\ref{algfree} and $j\in \mathcal{J}$.
\end{A}

\begin{A}\label{a2} For each $j\in \mathcal{J}$, there exist $L_j, M_j\in\R_{++}$ and $\beta_j\in (0,1]$ such that
	\begin{equation}\label{sup.03}
		f_j(y)\leq f_j(x)+\langle\nabla f_j(x),y-x\rangle+ \dfrac{L_j}{2} \|y-x\|^2+\dfrac{M_j}{\beta_j+1} \|y-x\|^{\beta_j+1}, \quad \forall\,x,y\in\R^n.
	\end{equation}		
\end{A}

In Remark \ref{f=s+r}, we present a specific situation in which Assumption~\ref{a2} is satisfied, thereby illustrating a concrete scenario in which this assumption is applicable.
\begin{remark}\label{f=s+r}
	 If $f_j = s_j + r_j$, where $s_j$ has a continuously differentiable gradient that is $L_j$-Lipschitz and $r_j$ has a continuously differentiable gradient that is $M_j$-H\"{o}lder continuous with exponent $\beta_j$, we have that 
	 	\begin{equation*}
	 	s_j(y)\leq s_j(x)+\langle\nabla s_j(x),y-x\rangle+ \dfrac{L_j}{2} \|y-x\|^2, \quad \forall\,x,y\in\R^n,\,\,j\in \mathcal{J},
	 \end{equation*}
	 and 
	 	\begin{equation*}
	 	r_j(y)\leq r_j(x)+\langle\nabla r_j(x),y-x\rangle+\dfrac{M_j}{\beta_j+1} \|y-x\|^{\beta_j+1}, \quad \forall\,x,y\in\R^n,\,\,j\in \mathcal{J}.
	 \end{equation*}
	 See, e.g., \cite[Lemma 1]{Yashtini2016}, which directly implies \eqref{sup.03}.
\end{remark}

\begin{A}\label{sup.003} For $L_j$ and $M_j$ given in Assumption~\ref{a2}, the following inequality holds for each $j \in \mathcal{J}$: 
	\begin{equation}\label{Lr}
		\|\nabla f_j(x)-g_{f_j}(x,\lambda)\| \leq \lambda\dfrac{\sqrt{n}L_j}{2}+\sqrt{n}\dfrac{M_j}{\beta_j+1}\lambda^{\beta_j}.
	\end{equation}
\end{A}

In the following, we present situations that illustrate how Assumptions~\ref{a2} and~\ref{sup.003} are not restrictive and can be satisfied by a broad class of functions.

\begin{remark}
It is important to highlight that, in Step~1, any function $g_{f_j}$ satisfying \eqref{gfj} and Assumption~\ref{sup.003} may be used; it does not need to be one of the specific constructions presented in Remark~\ref{diff}.
\end{remark}

In the following Lemma \ref{condLH}, we will demonstrate that, if $f_j$ satisfies the condition presented in Remark \ref{f=s+r} and if we consider $g_{f_j}$ as one of the three definitions presented in Remark \ref{diff}, then Assumption \ref{sup.003} holds.
  
\begin{lemma}\label{condLH}  
 Suppose $f_j = s_j + r_j$, where $s_j$ has a gradient that is $L_j$-Lipschitz continuous and $r_j$ has a gradient that is $M_j$-H\"{o}lder continuous with exponent $\beta_j$. Then the condition \eqref{Lr} is satisfied when considering $g_{f_j}(x,\lambda)$ in any of the definitions given in Remark \ref{diff}.	
\end{lemma}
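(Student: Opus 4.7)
The plan is to reduce the bound \eqref{Lr} to a componentwise estimate and then assemble the Euclidean norm via the elementary inequality $\|v\|_2\le \sqrt{n}\,\max_i |v_i|$, which is exactly where the $\sqrt{n}$ factor comes from. Concretely, for each $i\in\{1,\ldots,n\}$ and each of the three schemes in Remark~\ref{diff}, I would rewrite
\[
(g_{f_j}(x,\lambda))_i-\partial_i f_j(x)
\]
using the fundamental theorem of calculus so that it becomes an average of $\partial_i f_j(x+t e_i)-\partial_i f_j(x)$ over the relevant interval: $[0,\lambda]$ for the forward scheme, $[-\lambda,0]$ for the backward scheme, and $[-\lambda,\lambda]$ (with factor $1/(2\lambda)$) for the central scheme. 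This reduces the problem to a uniform bound on $\|\nabla f_j(x+t e_i)-\nabla f_j(x)\|$ as $t$ varies.

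For that uniform bound I would use the decomposition $f_j=s_j+r_j$ from the hypothesis: the triangle inequality combined with the $L_j$-Lipschitz continuity of $\nabla s_j$ and the $M_j$-Hölder continuity (with exponent $\beta_j$) of $\nabla r_j$ yields
\[
\|\nabla f_j(x+t e_i)-\nabla f_j(x)\|\le L_j|t|+M_j|t|^{\beta_j},
\]
since $\|t e_i\|=|t|$. Feeding this into the integral representation and computing the two elementary integrals
\[
\int_0^{\lambda} L_j\,t\,dt=\frac{L_j\lambda^{2}}{2},\qquad \int_0^{\lambda} M_j\,t^{\beta_j}\,dt=\frac{M_j\lambda^{\beta_j+1}}{\beta_j+1},
\]
and then dividing by $\lambda$ (or $2\lambda$ in the central case, after noting the integrand is even so the integral over $[-\lambda,\lambda]$ doubles that over $[0,\lambda]$), gives in all three cases the same per-component bound
\[
\bigl|(g_{f_j}(x,\lambda))_i-\partial_i f_j(x)\bigr|\le \frac{L_j\lambda}{2}+\frac{M_j}{\beta_j+1}\lambda^{\beta_j}.
\]

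Finally, taking the Euclidean norm of the vector of componentwise errors and bounding it by $\sqrt{n}$ times the componentwise maximum yields exactly inequality \eqref{Lr}. The main obstacle is really only a bookkeeping one: one must verify that the symmetric interval in the central-difference case does not worsen the constant, and that the Hölder term integrates correctly via $\int_0^\lambda t^{\beta_j}dt=\lambda^{\beta_j+1}/(\beta_j+1)$ (so that $\beta_j\in(0,1]$ poses no integrability issue). Once these elementary integrations are in hand, the three cases collapse into a single statement and the lemma follows.
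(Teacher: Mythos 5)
Your argument is correct and follows essentially the same route as the paper: a componentwise bound of $\tfrac{L_j}{2}\lambda+\tfrac{M_j}{\beta_j+1}\lambda^{\beta_j}$ on each difference quotient, the central case handled by symmetry/averaging of the one-sided cases, and the $\sqrt{n}$ factor from passing to the Euclidean norm. The only cosmetic difference is that you re-derive the componentwise estimate directly via the fundamental theorem of calculus, whereas the paper obtains the identical bound by applying the Taylor-remainder inequalities of \cite[Lemma 1]{Yashtini2016} to $s_j$ and $r_j$ at $y=x\pm\lambda e_i$ and combining them with the triangle inequality.
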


\begin{proof} Considering $s_j$ with a gradient that is $L_j$-Lipschitz continuous and $r_j$ with a gradient that is $M_j$-H\"{o}lder continuous with exponent $\beta_j$, by \cite[Lemma 1]{Yashtini2016} we can conclude that
\begin{equation}\label{eqLj}
		\|s_j(y)-s_j(x)-\nabla s_j(x)^\top(y-x)\|\leq \frac{L_j}{2}\|y-x\|^2
\end{equation}
	and 
\begin{equation}\label{eqMj}
		\|r_j(y)-r_j(x)-\nabla r_j(x)^\top(y-x)\|\leq \frac{M_j}{1+\beta_j}\|y-x\|^{\beta_j+1}.
\end{equation}
	Using $y=x+\lambda e_i$ in \eqref{eqLj} and \eqref{eqMj}, we have:
	\begin{eqnarray*}
		\bigg|\dfrac{s_j(x+\lambda e_i)-s_j(x)}{\lambda}-\dfrac{\partial s_j(x)}{\partial x_i}\bigg|&\leq &\frac{L_j}{2}\lambda
	\end{eqnarray*}
	and 
	\begin{eqnarray*}
		\bigg|\dfrac{r_j(x+\lambda e_i)-r_j(x)}{\lambda}-\dfrac{\partial r_j(x)}{\partial x_i}\bigg|&\leq &\frac{M_j}{1+\beta_j}\lambda^{\beta_j},
	\end{eqnarray*}
	this implies that
	\begin{equation*}
        \|\nabla s_j(x)-g_{s_j}(x,\lambda)\|\le \frac{\sqrt{n}\,L_j}{2}\,\lambda \;\text{and}\;
        \|\nabla r_j(x)-g_{r_j}(x,\lambda)\|\le \frac{\sqrt{n}\,M_j}{\beta_j+1}\,\lambda^{\beta_j}.
        \end{equation*}
Using $y=x-\lambda e_i$ in \eqref{eqLj} and \eqref{eqMj}, we have:	
		\begin{eqnarray*}
			\bigg|\dfrac{s_j(x)-s_j(x-\lambda e_i)}{\lambda}-\dfrac{\partial s_j(x)}{\partial x_i}\bigg|&\leq &\frac{L_j}{2}\lambda
		\end{eqnarray*}
		and 
		\begin{eqnarray*}
			\bigg|\dfrac{r_j(x)-r_j(x-\lambda e_i)}{\lambda}-\dfrac{\partial r_j(x)}{\partial x_i}\bigg|&\leq &\frac{M_j}{1+\beta_j}\lambda^{\beta_j},
		\end{eqnarray*}
		this implies that
		$$	\|\nabla s_j(x)-g_{s_j}(x,\lambda)\|\leq\dfrac{\sqrt{n}L_j}{2}\lambda \text{ and } 	\|\nabla r_j(x)-g_{r_j}(x,\lambda)\|\leq\dfrac{\sqrt{n}M_j}{\beta_j+1}\lambda^{\beta_j}.$$
		
		From the definitions given in Remark \ref{diff}, we obtain the following:
		\begin{eqnarray*}
			\bigg|\dfrac{s_j(x+\lambda e_i)-s_j(x+\lambda e_i)}{2\lambda}-\dfrac{\partial s_j(x)}{\partial x_i}\bigg|
			&\leq&\tfrac{1}{2}\bigg|\dfrac{s_j(x+\lambda e_i)-s_j(x)}{\lambda}-\dfrac{\partial s_j(x)}{\partial x_i}\bigg|\\
            &+&\tfrac{1}{2}\bigg|\dfrac{s_j(x)-s_j(x-\lambda e_i)}{\lambda}-\dfrac{\partial s_j(x)}{\partial x_i}\bigg|\\
			&\leq&\tfrac{1}{2}\bigg(\dfrac{\sqrt{n}L_j}{2}\lambda+\dfrac{\sqrt{n}L_j}{2}\lambda\bigg)=\dfrac{\sqrt{n}L_j}{2}\lambda.
	\end{eqnarray*}
In an analogous way, we obtain
$$\bigg|\dfrac{r_j(x+\lambda e_i)-r_j(x+\lambda e_i)}{2\lambda}-\dfrac{\partial r_j(x)}{\partial x_i}\bigg|\leq \dfrac{\sqrt{n}M_j}{\beta_j+1}\lambda^{\beta_j},$$
this implies that 
$$	\|\nabla s_j(x)-g_{s_j}(x,\lambda)\|\leq\dfrac{\sqrt{n}L_j}{2}\lambda \quad \text{and} \quad	\|\nabla r_j(x)-g_{r_j}(x,\lambda)\|\leq\dfrac{\sqrt{n}M_j}{\beta_j+1}\lambda^{\beta_j}.$$
Therefore, using the triangle inequality, we obtain
	$$\|\nabla f_j(x)-g_j(x,\lambda)\|\leq\|\nabla s_j(x)-g_{s_j}(x,\lambda)\|+ \|\nabla r_j(x)-g_{r_j}(x,\lambda)\|\leq \dfrac{\sqrt{n}L_j}{2}\lambda+\dfrac{\sqrt{n}M_j}{\beta_j+1}\lambda^{\beta_j},$$	
	concluding the proof of the lemma.
\end{proof}

\begin{remark}\label{remk1}
If there exists $\mathcal{L}\subset  \mathcal{J}$ such that $\nabla f_j$ is Lipschitz for all $j\in \mathcal{L}$ and $\nabla f_j$ is H\"{o}lder for all $j \in  \mathcal{J} \setminus \mathcal{L}$, then $f_j$ satisfies the conditions of Lemma \ref{condLH}. In this case, it suffices to define
\begin{equation*}\label{def:sjrj}
s_j \coloneqq \left\{\hspace{-5pt}
\renewcommand{\arraystretch}{1.3}
\begin{tabular}{ll}
 $f_j$, &\hspace{-12pt} if $j \in \mathcal{L}$,\\
 $0$,   &\hspace{-12pt} if $j \in \mathcal{J} \setminus \mathcal{L}$,
\end{tabular}\right.
\quad \text{and} \quad
r_j \coloneqq \left\{\hspace{-5pt}
\renewcommand{\arraystretch}{1.3}
\begin{tabular}{ll}
 $f_j$, &\hspace{-12pt} if $j \in \mathcal{J} \setminus \mathcal{L}$,\\
 $0$,   &\hspace{-12pt} if $j \in \mathcal{L}$.
\end{tabular}\right.
\end{equation*}
\end{remark}
A scenario that satisfies Remark \ref{remk1} is given below. For each $j\in \mathcal{J}$, let us consider $f_j:\R^n\to\R$ and a $\mathcal{L}\subset \mathcal{J}$ where
\begin{equation}\label{LH}
f_j(x)\!=\!\left\{\hspace{-5pt}
\renewcommand{\arraystretch}{1.8}
\begin{tabular}{ll}
 $\dfrac{1}{2}\|A_j x - b_j\|_2^2$, &\hspace{-12pt} if $j \in \mathcal{L}$,\\
 $\dfrac{\mu_j}{p}\|D_j x\|_p^p$, & \hspace{-12pt} if $j \in \mathcal{J} \setminus \mathcal{L}$,
\end{tabular}\right.
\end{equation}
where $A_j \in \mathbb{R}^{m_j \times n}$ is a positive semidefinite matrix, $b_j \in \mathbb{R}^{m_j}$, $D_j \in \mathbb{R}^{k_j \times n}$ is a linear operator, $\mu_j > 0$, and $1 < p < 2$. It is straightforward to verify that the quadratic functions $f_j(x) = \tfrac{1}{2}\|A_j x - b_j\|_2^2$ possess Lipschitz continuous gradients. Furthermore, it is established in \cite{Bernigaud2024} that the functions $f_j(x) = \tfrac{\mu_j}{p}\|D_j x\|_p^p$ have $(p-1)$-H\"{o}lder continuous gradients. The following Example \ref{ex1} presents a particular instance of the function defined in \eqref{LH}.

\begin{example}\label{ex1} 
Consider the function $f:\mathbb{R} \to \mathbb{R}^2$ defined as in \eqref{LH}, with $j=1,2$, where we take $A_1 = I$, $b_1 = 0$, $D_2 = I$, $\mu_2 = 1$, and $p=\tfrac{3}{2}$.
In this setting, $f_1(x) = |x|^2$ and $f_2(x) = |x|^{\tfrac{3}{2}}$. We claim that $\nabla f_2$ is H\"{o}lder continuous but not Lipschitz. Indeed, for $x \in \mathbb{R}$, 
\[
\nabla f_2(x) = \tfrac{3}{2} \cdot \operatorname{sgn}(x) \cdot |x|^{\tfrac{1}{2}}.
\]
Taking $x = 0$ and $y = \tfrac{1}{n}$, where $n \in \mathbb{N}$, we obtain
\[
|\nabla f_2(x) - \nabla f_2(y)| = \left|\nabla f_2(0) - \nabla f_2\left(\frac{1}{n}\right)\right| = \left|0 - \frac{3}{2} \cdot \frac{1}{\sqrt{n}}\right| = \frac{3}{2} \cdot \frac{1}{\sqrt{n}}.
\]
If $\nabla f_2$ were Lipschitz, there would exist a constant $M > 0$ such that 
\[
|\nabla f_2(x) - \nabla f_2(y)| \leq M |x - y|,
\]
so we would have
\[
\frac{3}{2} \cdot \frac{1}{\sqrt{n}} \leq M \left|0 - \frac{1}{n}\right| = \frac{M}{n},
\quad \text{i.e.,} \quad
\frac{3}{2} \cdot \sqrt{n} \leq M.
\]
This would imply that the set of natural numbers is bounded above, which is absurd. Therefore, we conclude that the gradient $\nabla f_2$ is not Lipschitz.
\end{example}

The Assumptions \ref{a2} and \ref{sup.003} provide greater modeling flexibility than traditional approaches based solely on Lipschitz or H\"{o}lder continuity. Specifically, they allow each function $f_j$ to be decomposed into a sum of two components: one with a gradient satisfying the Lipschitz condition and the other with a gradient satisfying the H\"{o}lder condition. This decomposition encompasses a broader class of problems in which the smoothness of each $f_j$ is non-uniform. As a result, the method remains valid and effective regardless of whether each $f_j$ has a purely Lipschitz gradient, a purely H\"{o}lder gradient, or any structured combination of both. It is important to highlight that for each $f_j$ that has a gradient satisfying only Lipschitz conditions or only H\"{o}lder conditions, Assumption \ref{a2} is also satisfied.

The next lemmas are useful in demonstrating that the sufficient descent condition at Step~4 is satisfied for sufficiently large $\sigma_k$, as stated in Theorem~\ref{bdef}.

\begin{lemma}\label{BD}
	If $\bar{x}^k$ is a solution of Problem \eqref{probminmax}, then there exist $\gamma\in~\R_+^m,\,\dsum_{j=1}^{m}\gamma_{j}=1$ and $w_j^k\in\partial h_j(\bar{x}^{k}),$ such that	\begin{equation}\label{sup.001}
		\langle g_{f_j}(x^k,\lambda_k),\bar{x}^k-x^k\rangle+\dfrac{1}{2}\langle B_j^k(\bar{x}^k-x^k),\bar{x}^k-x^k\rangle+\dfrac{\sigma_k}{2}\|\bar{x}^k-x^k\|^2+h_j(\bar{x}^{k})\leq h_ j(x^k),
	\end{equation}
	for all $j\in \mathcal{J}$, and 
	\begin{equation}\label{cond1}
		\|\sum_{j=1}^m\gamma_j
		\big[g_{f_j}(x^k,\lambda_k)+w_j^{k}+B_j^k(\Bar{x}^k-x^k)\big]+\sigma_k (\bar{x}^k-x^k)\|=0.
	\end{equation}
\end{lemma}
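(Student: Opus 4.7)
The plan is to derive both conclusions from the first-order optimality condition for the subproblem (5). Since each $h_j$ is convex and each $B_j^k$ is positive semidefinite, the functions $\phi_j(x) \coloneqq \langle g_{f_j}(x^k,\lambda_k), x-x^k\rangle + \tfrac12\langle B_j^k(x-x^k), x-x^k\rangle + h_j(x) - h_j(x^k)$ are convex, and consequently $\Phi_{x^k} = \max_{j\in\J}\phi_j$ and $\Psi_{\sigma_k}^{x^k}(\cdot) = \Phi_{x^k}(\cdot) + \tfrac{\sigma_k}{2}\|\cdot - x^k\|^2$ are convex; the excerpt already establishes that $\bar x^k$ is the unique minimizer of $\Psi_{\sigma_k}^{x^k}$. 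A key elementary observation is that $\Psi_{\sigma_k}^{x^k}(x^k) = \Phi_{x^k}(x^k) = \max_{j\in\J}[h_j(x^k)-h_j(x^k)] = 0$.

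Inequality (6) then follows almost immediately: the minimizing property gives $\Psi_{\sigma_k}^{x^k}(\bar x^k) \le \Psi_{\sigma_k}^{x^k}(x^k) = 0$, and since $\phi_j(\bar x^k) \le \max_{i\in\J}\phi_i(\bar x^k) = \Phi_{x^k}(\bar x^k)$ for every $j$, one gets $\phi_j(\bar x^k) + \tfrac{\sigma_k}{2}\|\bar x^k - x^k\|^2 \le 0$; rearranging against the definition of $\phi_j$ is precisely (6).

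For the stationarity condition (7), my approach is to reformulate (5) in epigraph form, $\min_{(x,t)}\bigl\{t + \tfrac{\sigma_k}{2}\|x-x^k\|^2 : \phi_j(x) \le t,\ j\in\J\bigr\}$, so that Slater's condition holds trivially at $(x^k, 1)$ (every $\phi_j(x^k)=0<1$) and convex KKT applies. Stationarity in $t$ yields multipliers $\gamma_j \ge 0$ with $\sum_{j\in\J}\gamma_j = 1$, while stationarity in $x$, combined with the Moreau--Rockafellar sum rule $\partial_x\phi_j(\bar x^k) = \{g_{f_j}(x^k,\lambda_k) + B_j^k(\bar x^k - x^k)\} + \partial h_j(\bar x^k)$ (valid since the smooth summand has full domain), produces selections $w_j^k\in\partial h_j(\bar x^k)$ with $\sigma_k(\bar x^k - x^k) + \sum_{j\in\J}\gamma_j[g_{f_j}(x^k,\lambda_k) + w_j^k + B_j^k(\bar x^k - x^k)]=0$, which is exactly (7); for indices with $\gamma_j=0$, any $w_j^k\in\partial h_j(\bar x^k)$ may be chosen.

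The only delicate point is the membership $\bar x^k\in\bigcap_{j\in\J}\operatorname{dom}(h_j)$ and the attendant nonemptiness of $\partial h_j(\bar x^k)$ needed to exhibit the selections. The first is forced by the finiteness $\Psi_{\sigma_k}^{x^k}(\bar x^k)\le 0$, which requires every $h_j(\bar x^k)<\infty$; the second is standard in the setting of interest, for instance when $h_j$ is the indicator of a convex set $\mathcal{C}$ with $\bar x^k\in\mathcal{C}$, where $\partial h_j(\bar x^k)=N_{\mathcal{C}}(\bar x^k)$ is the normal cone.
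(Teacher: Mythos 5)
Your proposal is correct and follows essentially the same route as the paper: inequality \eqref{sup.001} is obtained from $\Psi_{\sigma_k}^{x^k}(\bar{x}^k)\leq \Psi_{\sigma_k}^{x^k}(x^k)=0$ together with the fact that each bracketed term is bounded by the max, and \eqref{cond1} is obtained from the same epigraph reformulation \eqref{probzp} followed by the KKT conditions. Your additional verifications (Slater's condition at $(x^k,1)$, the Moreau--Rockafellar sum rule, and the finiteness of $h_j(\bar{x}^k)$) are details the paper leaves implicit, not a different argument.
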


\begin{proof}
	Let $\bar{x}^k$ be the solution of Problem \eqref{probminmax}, then $\Psi_\sigma^{x^k}(\bar{x}^k)\leq \Psi_\sigma^{x^k}(x)$ for all $x\in\R^n$, whence it follows that $\Psi_\sigma^{x^k}(\bar{x}^k)\leq \Psi_\sigma^{x^k}(x^k)=0$, that is,
	$$\max_{j\in \mathcal{J}} \big[\langle g_{f_j}(x^k,\lambda),\bar{x}^k-x^k\rangle+\tfrac{1}{2}\langle B^k_j(\bar{x}^k-x^k),\bar{x}^k-x^k\rangle+h_j(\bar{x}^k)-h_j(x^k)\big]+\tfrac{\sigma}{2}\|\bar{x}^k-x^k\|^2\leq 0,$$ 
	which implies
	$$\langle g_{f_j}(x^k,\lambda),\bar{x}^k-x^k\rangle+\tfrac{1}{2}\langle B^k_j(\bar{x}^k-x^k),\bar{x}^k-x^k\rangle+h_j(\bar{x}^k)-h_j(x^k)+\tfrac{\sigma}{2}\|\bar{x}^k-x^k\|^2\leq 0,\quad j\in \mathcal{J}.$$
	This proves condition \eqref{sup.001}.
	
	To prove condition \eqref{cond1}, we follow the same steps taken by Calderón \cite{teselizet} in the proof of Theorem~2.1. We can see that solving Problem \eqref{probminmax} is equivalent to solving the following problem:
	\begin{eqnarray}\label{probzp}
		&&\min_{x\in\R^n,z\in\R} \,z+\tfrac{\sigma_k}{2}\|x-x^k\|^2\\
		&&\text{s.t.}\;\;\langle g_{f_j}(x^k, \lambda_k),x-x^k\rangle+\tfrac{1}{2}\langle B_j^k(x-x^k),(x-x^k)\rangle+h_j(x)-h_j(x^k)\leq z,\quad \forall\,j\in \mathcal{J}.\nonumber
	\end{eqnarray}
	If $\bar{x}^k$ is a solution of \eqref{probzp}, then by the Karush–Kuhn–Tucker (KKT) conditions \cite{martinez1}, there exist $\gamma\in\R_+^m$ and $w_j^k\in\partial h_j(\bar{x}^k)$ such that 
	\[
\begin{aligned}
&\begin{pmatrix}
1 \\
\sigma_k(\bar{x}^k-x^k)
\end{pmatrix}
+\sum_{j=1}^m\gamma_j
\begin{pmatrix}
-1 \\
g_{f_j}(x^k, \lambda_k)+B_j^k(\bar{x}^k-x^k)+w_j^k
\end{pmatrix}
=
\begin{pmatrix}
0 \\
0
\end{pmatrix},\\[6pt]
&\gamma_j\Big(
\langle g_{f_j}(x^k, \lambda_k), \bar{x}^k-x^k\rangle
+\tfrac{1}{2}\langle B_j^k(\bar{x}^k-x^k),\bar{x}^k-x^k\rangle
+h_j(\bar{x}^k)-h_j(x^k)-z
\Big)=0,\quad j\in \mathcal{J}.
\end{aligned}
\]
From which it follows that
\begin{equation}\label{qq}
	\sigma_k (\bar{x}^k-x^k)=-\sum_{j=1}^m\gamma_j
	\big(g_{f_j}(x^k, \lambda_k)+w_j^k+B_j^k(\bar{x}^k-x^k)\big),\quad \sum_{j=1}^m\gamma_j=1.
\end{equation}
The condition \eqref{cond1} follows directly from \eqref{qq}. This concludes the proof of Lemma~\ref{BD}.
\end{proof}

\begin{lemma}\label{condsuf01}
	Let $\bar{x}^k$ be the point computed in Step~2. Suppose that Assumptions \ref{A2}, \ref{a2} and \ref{sup.003} hold. If 
	\begin{equation*}\label{estsk}
		\sigma_k\|\bar{x}^k-x^k\|\geq\epsilon,
	\end{equation*}
	then
	\begin{equation*}\label{dec02}
		F_j(\bar{x}^{k})\leq F_j(x^k)-\bigg(\dfrac{\sigma_k-5L_j-\overline{B}}{2}\bigg)\|s^k\|^{2}
  +\dfrac{4^{\beta_j} n^{\tfrac{1-\beta_j}{2}} M_j+M_j}{\beta_j+1}\|s^k\|^{\beta_j+1},\quad j\in \mathcal{J},
	\end{equation*}
	where $s^k\coloneqq\bar{x}^{k}-x^k$.
\end{lemma}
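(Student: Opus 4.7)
\emph{Plan.} The idea is to combine the descent estimate \eqref{sup.03} from Assumption~\ref{a2} (which involves the exact gradient $\nabla f_j(x^k)$) with the optimality inequality \eqref{sup.001} for $\bar{x}^k$ from Lemma~\ref{BD} (which involves the approximate gradient $g_{f_j}(x^k,\lambda_k)$), and then absorb the mismatch as a residual controlled by Assumption~\ref{sup.003} together with the step-size rule \eqref{d1} and the hypothesis \eqref{estsk}.

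Concretely, I first apply \eqref{sup.03} with $x=x^k$ and $y=\bar{x}^k$ and add $h_j(\bar{x}^k)-h_j(x^k)$ to both sides so that the left-hand side becomes $F_j(\bar{x}^k)-F_j(x^k)$. Writing
\[\langle\nabla f_j(x^k),s^k\rangle = \langle g_{f_j}(x^k,\lambda_k),s^k\rangle + \langle\nabla f_j(x^k)-g_{f_j}(x^k,\lambda_k),s^k\rangle,\]
I then use \eqref{sup.001} to replace the pair $\langle g_{f_j}(x^k,\lambda_k),s^k\rangle + h_j(\bar{x}^k)-h_j(x^k)$ by its bound $-\tfrac{1}{2}\langle B_j^k s^k,s^k\rangle - \tfrac{\sigma_k}{2}\|s^k\|^2$. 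The matrix term is controlled by Cauchy--Schwarz and Assumption~\ref{A2}, giving $-\tfrac{1}{2}\langle B_j^k s^k,s^k\rangle \leq \tfrac{\overline{B}}{2}\|s^k\|^2$, while the gradient-approximation residual is controlled by Cauchy--Schwarz and Assumption~\ref{sup.003}, yielding
\[\langle\nabla f_j(x^k)-g_{f_j}(x^k,\lambda_k),s^k\rangle \leq \Bigl(\tfrac{\sqrt{n}L_j}{2}\lambda_k+\tfrac{\sqrt{n}M_j}{\beta_j+1}\lambda_k^{\beta_j}\Bigr)\|s^k\|.\]

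The decisive step is to convert this $\lambda_k$-dependent residual into monomials in $\|s^k\|$. Combining \eqref{d1} with the standing hypothesis \eqref{estsk} gives $\sqrt{n}\lambda_k \leq \epsilon/\sigma_k \leq \|s^k\|$, hence $\lambda_k \leq \|s^k\|/\sqrt{n}$ and $\lambda_k^{\beta_j} \leq \|s^k\|^{\beta_j}/n^{\beta_j/2}$. Substituting transforms the residual into $\tfrac{L_j}{2}\|s^k\|^2 + \tfrac{n^{(1-\beta_j)/2}M_j}{\beta_j+1}\|s^k\|^{\beta_j+1}$. Adding this to the $\tfrac{L_j}{2}\|s^k\|^2+\tfrac{M_j}{\beta_j+1}\|s^k\|^{\beta_j+1}$ from \eqref{sup.03} and to the $\tfrac{\overline{B}}{2}\|s^k\|^2$ from the matrix term produces an inequality of the shape \eqref{dec02} with coefficients $2L_j$ and $1$ in place of $5L_j$ and $4^{\beta_j}$; the stated bound then follows at once since $2L_j \leq 5L_j$ and $1 \leq 4^{\beta_j}$. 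The main obstacle is precisely the bookkeeping in this last paragraph: lining up \eqref{d1} and \eqref{estsk} so that every residual $\lambda_k$ and $\epsilon$ is cleanly absorbed into a power of $\|s^k\|$ with the correct exponent of $n$, and keeping track of the $\beta_j$-dependent constants that emerge.
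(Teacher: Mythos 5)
Your proposal is correct and follows essentially the same route as the paper's own proof: Assumption~\ref{a2} at $(x^k,\bar{x}^k)$, the optimality inequality \eqref{sup.001}, Cauchy--Schwarz with Assumptions~\ref{A2} and~\ref{sup.003}, and then the chain $\sqrt{n}\lambda_k\le\epsilon/\sigma_k\le\|s^k\|$ to absorb the residual into powers of $\|s^k\|$. You even land on the same sharper constants ($2L_j$ and $1$ in place of $5L_j$ and $4^{\beta_j}$) that the paper's own proof actually produces, and your closing observation that the stated looser bound follows trivially is the one step the paper leaves implicit.
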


\begin{proof} 
By Assumption \ref{a2}, we have 
\begin{equation}\label{lip1}
	f_j(\bar{x}^{k})\leq f_j(x^k)+\langle \nabla f_j(x^k),s^k\rangle+ \frac{L_j}{2} \|s^k\|^{2}+\dfrac{M_j}{\beta_j+1} \|s^k\|^{\beta_j+1},\quad j\in \mathcal{J}.	
\end{equation}
Using \eqref{sup.001}, Assumption~\ref{sup.003}, and \eqref{lip1}, we obtain
\begin{align*}
	f_j(\bar{x}^{k})+h_j(\bar{x}^{k})
    &\leq f_j(x^k)+h_j(\bar{x}^{k})
     +\langle g_{f_j}(x^k,\lambda_k),s^k\rangle
     +\tfrac{1}{2}\langle B_j^ks^k,s^k\rangle
     +\tfrac{\sigma_k}{2}\|s^k\|^2
     +\tfrac{L_j}{2} \|s^k\|^{2} \\
	&\quad + \langle \nabla f_j(x^k)-g_{f_j}(x^k,\lambda_k),s^k\rangle
     +\dfrac{M_j}{\beta_j+1} \|s^k\|^{\beta_j+1}
     -\tfrac{1}{2}\langle B_j^ks^k,s^k\rangle
     -\tfrac{\sigma_k}{2}\|s^k\|^2 \\
	&\leq f_j(x^k)+h_j(x^k)
     +\|\nabla f_j(x^k)-g_{f_j}(x^k,\lambda_k)\|\|s^k\|
     + \dfrac{M_j}{\beta_j+1} \|s^k\|^{\beta_j+1}\\
	&\leq F_j(x^k)
     +\sqrt{n}\dfrac{L_j}{2}\lambda_k\|s^k\|
     +\sqrt{n}\dfrac{M_j}{\beta_j+1}\lambda_k^{\beta_j}\|s^k\|
     + \dfrac{M_j}{\beta_j+1} \|s^k\|^{\beta_j+1}\\
    &\quad -\frac{\sigma_k-\overline{B}-L_j}{2}\|s^k\|^2.
\end{align*}
By $0<\lambda_k\leq \dfrac{\epsilon}{\sigma_k\sqrt{n}}$ and the last inequality we have that
\begin{align*}
F_j(\bar{x}^{k})
&\leq F_j(x^k)
   +\sqrt{n}\,\dfrac{L_j}{2}\dfrac{\epsilon}{\sigma_k\sqrt{n}}\|s^k\|
   +\sqrt{n}\,\dfrac{M_j}{\beta_j+1}\dfrac{\epsilon^{\beta_j}}{\sigma_k^{\beta_j}\,(\sqrt{n})^{\beta_j}}\|s^k\| \\
&\quad -\bigg(\dfrac{\sigma_k-L_j-\overline{B}}{2}\bigg)\|s^k\|^{2}
   +\dfrac{M_j}{\beta_j+1}\|s^k\|^{\beta_j+1} \\[6pt]
&\leq F_j(x^k)
   +\dfrac{L_j}{2}\dfrac{\epsilon}{\sigma_k}\|s^k\|
   +n^{\tfrac{1-\beta_j}{2}}\dfrac{M_j}{\beta_j+1}\dfrac{\epsilon^{\beta_j}}{\sigma_k^{\beta_j}}\|s^k\| \\
&\quad -\bigg(\dfrac{\sigma_k-L_j-\overline{B}}{2}\bigg)\|s^k\|^{2}
   +\dfrac{M_j}{\beta_j+1}\|s^k\|^{\beta_j+1}.
\end{align*}

Using $\sigma_k\|s^k\|\geq\epsilon$ in the last inequality we obtain
\begin{align*}
F_j(\bar{x}^{k})
&\leq F_j(x^k)
   +\dfrac{L_j}{2}\|s^k\|\|s^k\|
   +n^{\tfrac{1-\beta_j}{2}}\dfrac{M_j}{\beta_j+1}\|s^k\|^{\beta_j}\|s^k\| \\
&\quad -\bigg(\dfrac{\sigma_k-L_j-\overline{B}}{2}\bigg)\|s^k\|^{2}
   +\dfrac{M_j}{\beta_j+1}\|s^k\|^{\beta_j+1} \\[6pt]
&\leq F_j(x^k)
   +\dfrac{L_j}{2}\|s^k\|^2
   +n^{\tfrac{1-\beta_j}{2}}\dfrac{M_j}{\beta_j+1}\|s^k\|^{\beta_j+1} \\
&\quad -\bigg(\dfrac{\sigma_k-L_j-\overline{B}}{2}\bigg)\|s^k\|^{2}
   +\dfrac{M_j}{\beta_j+1}\|s^k\|^{\beta_j+1} \\[6pt]
&\leq F_j(x^k)
   -\bigg(\dfrac{\sigma_k-2L_j-\overline{B}}{2}\bigg)\|s^k\|^{2}
   +\dfrac{n^{\tfrac{1-\beta_j}{2}} M_j+M_j}{\beta_j+1}\|s^k\|^{\beta_j+1}.
\end{align*}
This concludes the proof of Lemma~\ref{condsuf01}.
\end{proof}

In Theorem~\ref{bdef} below we prove that there exists a $\sigma_k$ such that the decrement given in \eqref{eq.te} is achieved, thus ensuring the well-definedness of Step~4.
		
\begin{theorem}\label{bdef}
	Let $\bar{x}^k$ be the point computed in Step~2 of Algorithm~\ref{algfree}. Suppose that Assumptions \ref{A2}, \ref{a2}, and \ref{sup.003} hold. If 
	\begin{equation}\label{estgrad12}
		\sigma_k\|\bar{x}^{k}-x^k\|\geq \epsilon,
	\end{equation}
	and  
	\begin{equation*}\label{sig}
		\sigma_k\geq \max_{j\in \mathcal{J}}\bigg[\dfrac{5L_j+\overline{B}}{1-\alpha}
        +\dfrac{2 n^{\tfrac{1-\beta_j}{2}} M_j+2M_j}{(\beta_j+1)(1-\alpha)}\epsilon^{\beta_j-1}\bigg]^{\tfrac{1}{\beta_j}},
	\end{equation*}
	then
	\begin{equation}\label{decf}
		F_j(\bar{x}^{k})\leq F_j(x^k)-\dfrac{\alpha}{2}\sigma_k\|\bar{x}^{k}-x^k\|^2,\quad j\in \mathcal{J},
	\end{equation}
	and 
	\begin{equation}\label{deceps}
		F_j(\bar{x}^{k})\leq F_j(x^k)-\dfrac{\alpha\epsilon^2}{2\sigma_k},\quad j\in \mathcal{J}.
	\end{equation}
\end{theorem}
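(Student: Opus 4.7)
The plan is to derive \eqref{decf} from the decrement inequality of Lemma~\ref{condsuf01} by using the step-length lower bound \eqref{estgrad12} to dominate the Hölder correction term, and then deduce \eqref{deceps} directly from \eqref{decf}. Lemma~\ref{condsuf01} gives, for each $j\in\mathcal{J}$,
$$F_j(\bar{x}^{k}) - F_j(x^k) \leq -\frac{\sigma_k - 5L_j - \overline{B}}{2}\|s^k\|^2 + C_j\,\|s^k\|^{\beta_j + 1},$$
where $C_j$ denotes the coefficient appearing there. Establishing \eqref{decf} is therefore equivalent to absorbing the Hölder residue into a fraction $(1-\alpha)$ of the quadratic decrement, which rearranges to
$$\frac{(1-\alpha)\sigma_k - 5L_j - \overline{B}}{2}\|s^k\|^2 \;\geq\; C_j\,\|s^k\|^{\beta_j+1}.$$

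The central step is to convert this into a condition on $\sigma_k$ alone. Since \eqref{estgrad12} forces $\|s^k\| \geq \epsilon/\sigma_k > 0$, I divide both sides by $\|s^k\|^2$ and, using $\beta_j - 1 \leq 0$, bound $\|s^k\|^{\beta_j - 1} \leq (\epsilon/\sigma_k)^{\beta_j - 1} = \epsilon^{\beta_j - 1}\sigma_k^{1-\beta_j}$. Thus it is enough to prove
$$(1-\alpha)\sigma_k \;\geq\; 5L_j + \overline{B} + 2C_j\,\epsilon^{\beta_j - 1}\sigma_k^{1-\beta_j}.$$
Dividing through by $\sigma_k^{1-\beta_j}$ and invoking $\sigma_k \geq \sigma_0 \geq 1$ (so that $\sigma_k^{1-\beta_j} \geq 1$) reduces the task to
$$(1-\alpha)\sigma_k^{\beta_j} \;\geq\; 5L_j + \overline{B} + 2C_j\,\epsilon^{\beta_j - 1},$$
which is exactly \eqref{sig} after dividing by $(1-\alpha)$ and taking $\beta_j$-th roots.

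Finally, \eqref{deceps} is an immediate corollary of \eqref{decf}: from \eqref{estgrad12} I get $\sigma_k\|s^k\|^2 \geq \epsilon^2/\sigma_k$, so $-\tfrac{\alpha\sigma_k}{2}\|s^k\|^2 \leq -\tfrac{\alpha\epsilon^2}{2\sigma_k}$, and substitution into \eqref{decf} yields the desired bound.

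The main obstacle is reconciling the first-degree dependence on $\sigma_k$ that arises naturally on the left-hand side of the descent inequality with the $\sigma_k^{\beta_j}$-type bound in \eqref{sig}. The bridge is the factor $\sigma_k^{1-\beta_j}$ generated when $\|s^k\|^{\beta_j-1}$ is replaced via \eqref{estgrad12}; this is handled cleanly by the observation that $\sigma_k \geq 1$ throughout the execution of Algorithm~\ref{algfree}, since $\sigma_0 \geq 1$ and the update rule only doubles $\sigma_k$.
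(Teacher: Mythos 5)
Your argument follows exactly the same route as the paper's proof of Theorem~\ref{bdef}: start from the decrement inequality of Lemma~\ref{condsuf01}, reduce \eqref{decf} to a threshold condition on $\sigma_k$, eliminate $\|s^k\|^{\beta_j-1}$ via \eqref{estgrad12} together with $\beta_j\le 1$, and use $\sigma_k\ge\sigma_0\ge 1$ to trade a bound on $\sigma_k$ for a bound on $\sigma_k^{\beta_j}$; the derivation of \eqref{deceps} from \eqref{decf} is also identical. Your explicit invocation of $\sigma_k\ge 1$ is in fact cleaner than the paper, which performs the same factoring of $\sigma_k^{1-\beta_j}$ without stating the hypothesis it needs.

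The one step that does not close as written is the final identification ``which is exactly \eqref{sig}''. If $C_j$ is taken literally from the statement of Lemma~\ref{condsuf01}, i.e.\ $C_j=\frac{4^{\beta_j}n^{(1-\beta_j)/2}M_j+M_j}{\beta_j+1}$, then your sufficient condition $(1-\alpha)\sigma_k^{\beta_j}\ge 5L_j+\overline{B}+2C_j\epsilon^{\beta_j-1}$ is strictly stronger than \eqref{sig}, whose H\"older term is $\frac{2n^{(1-\beta_j)/2}M_j+2M_j}{(\beta_j+1)(1-\alpha)}\epsilon^{\beta_j-1}$: the factor $4^{\beta_j}>1$ is missing, so \eqref{sig} does not imply the inequality you need. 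The root cause is an inconsistency in the paper itself: the proof of Lemma~\ref{condsuf01} actually establishes the decrement with constants $2L_j$ and $\frac{n^{(1-\beta_j)/2}M_j+M_j}{\beta_j+1}$ (this is the form \eqref{d11} quoted inside the proof of Theorem~\ref{bdef}), while the lemma's displayed statement \eqref{dec02} carries $5L_j$ and the spurious $4^{\beta_j}$. The paper's proof of the theorem silently works with the constants from the lemma's proof, for which \eqref{sig} is (more than) sufficient. To make your argument airtight you should either use those tighter constants, or note explicitly that the hypothesis \eqref{sig} must be strengthened to accommodate the coefficient as stated in \eqref{dec02}.
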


\begin{proof} 
We define $s^k \coloneqq \bar{x}^{k}-x^k$. 
In Lemma~\ref{condsuf01}, we obtain
\begin{equation*}\label{d11}
	F_j(\bar{x}^{k}) \leq F_j(x^k)
    -\bigg(\dfrac{\sigma_k-2L_j-\overline{B}}{2}\bigg)\|s^k\|^{2}
    +\dfrac{n^{\tfrac{1-\beta_j}{2}} M_j+M_j}{\beta_j+1}\|s^k\|^{\beta_j+1}.
\end{equation*}
Thus, taking this inequality into consideration, we can conclude that to prove inequality \eqref{decf} it is necessary to show that
\[
-\dfrac{\alpha}{2}\sigma_k\|s^k\|^2 \;\geq\;
-\bigg(\dfrac{\sigma_k-2L_j-\overline{B}}{2}\bigg)\|s^k\|^{2}
+\dfrac{n^{\tfrac{1-\beta_j}{2}} M_j+M_j}{\beta_j+1}\|s^k\|^{\beta_j+1},
\]
which is equivalent to proving the inequality
\[
\sigma_k \;\geq\; \dfrac{2L_j+\overline{B}}{1-\alpha}
+\dfrac{2n^{\tfrac{1-\beta_j}{2}} M_j+2M_j}{(\beta_j+1)(1-\alpha)}\|s^k\|^{\beta_j-1}.
\]
By \eqref{estgrad12}, we have that $\dfrac{\epsilon}{\sigma_k}\leq \|s^k\|$, which implies
\begin{align*}
&\dfrac{2L_j+\overline{B}}{1-\alpha}
+\dfrac{2 n^{\tfrac{1-\beta_j}{2}} M_j+2M_j}{(\beta_j+1)(1-\alpha)}\|s^k\|^{\beta_j-1} \\[6pt]
&\quad \leq \dfrac{2L_j+\overline{B}}{1-\alpha}
+\dfrac{2 n^{\tfrac{1-\beta_j}{2}} M_j+2M_j}{(\beta_j+1)(1-\alpha)}
\bigg(\dfrac{\epsilon}{\sigma_k}\bigg)^{\beta_j-1} \\[6pt]
&\quad \leq \dfrac{2L_j+\overline{B}}{1-\alpha}
+\dfrac{2 n^{\tfrac{1-\beta_j}{2}} M_j+2M_j}{(\beta_j+1)(1-\alpha)}
\epsilon^{\beta_j-1}\sigma_k^{1-\beta_j} \\[6pt]
&\quad \leq \left[
\dfrac{2L_j+\overline{B}}{1-\alpha}
+\dfrac{2 n^{\tfrac{1-\beta_j}{2}} M_j+2M_j}{(\beta_j+1)(1-\alpha)}
\epsilon^{\beta_j-1}\right]\sigma_k^{1-\beta_j} \\[6pt]
&\quad \leq \sigma_k^{\beta_j}\,\sigma_k^{1-\beta_j}
= \sigma_k.
\end{align*}
This completes the proof of \eqref{decf}. To obtain \eqref{deceps}, we combine \eqref{estsk} with \eqref{decf}.
\end{proof}

Theorem~\ref{bdef} guarantees that, as long as the test point $\bar{x}^{k}$ does not satisfy the stopping criterion described in Step~$3$, there will be a decrease in the value of the objective function, with a reduction established in Step~$4$. This reduction is fundamental for obtaining the complexity results, which determine the maximum number of iterations and evaluations necessary to reach an approximate solution with the given tolerance.

The following remark provides an estimate for $\lambda_k$ such that the stopping criterion established in Step~$3$ provides a reasonable approximation to a stationary point as defined in \eqref{stationary}.

\begin{remark}\label{cparada}
In Lemma~\ref{BD}, we showed that if $\bar{x}^k$ is a solution to \eqref{probminmax}, then there exist $\gamma \in \mathbb{R}_+^m$ with $\sum_{j=1}^{m} \gamma_j = 1$ and $w_j^k \in \partial h_j(\bar{x}^k)$, for $j \in \mathcal{J}$, such that
\begin{equation*}\label{cond10}
\|
\sum_{j=1}^m \gamma_j \big[g_{f_j}(x^k,\lambda_k)+w_j^{k}+B_j^k(\bar{x}^k-x^k)\big]
+ \sigma_k (\bar{x}^k-x^k)
\| = 0.
\end{equation*}
In particular, if $\bar{x}^k = x^k$, then
$
\|\sum_{j=1}^m \gamma_j \big[g_{f_j}(x^k,\lambda_k)+w_j^{k}\big]\| = 0.
$
Hence,
\begin{align*}
\|\sum_{j=1}^m \gamma_j\big[\nabla f_j(x^k)+w_j^k\big]\|
&\leq \|\sum_{j=1}^m \gamma_j\nabla f_j(x^k)-\sum_{j=1}^m \gamma_j g_{f_j}(x^k,\lambda_k)\| \\
&\quad + \|\sum_{j=1}^m \gamma_j \big[g_{f_j}(x^k,\lambda_k)+w_j^k\big]\| \\
&= \sum_{j=1}^m \gamma_j \big\|\nabla f_j(x^k)- g_{f_j}(x^k,\lambda_k)\big\| \\
&\leq \sum_{j=1}^m \gamma_j \dfrac{\sqrt{n}L_j}{2}\lambda_k
+ \sum_{j=1}^m \gamma_j \dfrac{\sqrt{n}M_j}{1+\beta_j}\lambda_k^{\beta_j}.
\end{align*}
If
\(
\lambda_k \in \left[0,\,
\displaystyle\min_{1\le j\le m}\left\{
\dfrac{\epsilon}{\sqrt{n}L_j},\;
\left(\dfrac{(1+\beta_j)\epsilon}{2\sqrt{n}M_j}\right)^{\tfrac{1}{\beta_j}},\;
\dfrac{\epsilon}{\sqrt{n}\sigma_k}
\right\}
\right],
\)
then
\begin{align*}
\|\sum_{j=1}^m \gamma_j\big[\nabla f_j(x^k)+w_j^k\big]\|
&\leq \sum_{j=1}^m \gamma_j \dfrac{\sqrt{n}L_j}{2}\cdot\dfrac{\epsilon}{\sqrt{n}L_j}
+ \sum_{j=1}^m \gamma_j \dfrac{\sqrt{n}M_j}{1+\beta_j}\cdot\dfrac{(1+\beta_j)\epsilon}{2\sqrt{n}M_j} \\[4pt]
&= \sum_{j=1}^m \gamma_j \dfrac{\epsilon}{2}
+ \sum_{j=1}^m \gamma_j \dfrac{\epsilon}{2} \;=\; \epsilon.
\end{align*}
This implies that the distance from $0$ to the set $\sum_{j=1}^m\gamma_j\big(\nabla f_j(x^k)+\partial h_j(x^k)\big)$ is at most $\epsilon$.
Thus, it is reasonable to stop the method when $\bar{x}^k$ is close to $x^k$. Therefore, the stopping criterion adopted in Step~3 is appropriate.
\end{remark}

The Section~\ref{seccomplex} is dedicated to establishing upper bounds on the number of iterations and the number of function evaluations required to reach an approximate solution within a predefined tolerance.

\section{ Complexity Analysis}\label{seccomplex}
In Theorem~\ref{bdef} we proved that the reduction in $F_j$, for each $j\in \mathcal{J}$, required in Step~4 of Algorithm~\ref{algfree} is achieved for all sufficiently large $\sigma_k$. In Theorem~\ref{tedec1}, we then prove that the expected reduction in each $F_j$ is a constant factor, depending only on the constants given in the algorithm and on Assumptions~\ref{A2}, \ref{a2}, and \ref{sup.003}.

\begin{theorem}\label{tedec1}
Suppose that Assumptions \ref{A2}, \ref{a2}, and \ref{sup.003} are satisfied, and let $x^{k+1}$ be the output of Algorithm~\ref{algfree}. Then the minimum reduction in the objective function is given by
\begin{equation}\label{eq0001.te} 
    F_j(x^{k+1}) \leq F_j(x^k) - \dfrac{\alpha}{2c}\,\epsilon^{\tfrac{\beta+1}{\beta}}, 
    \quad j \in \mathcal{J},
\end{equation}
where 
\[
\beta = \min\{\beta_j : j \in \mathcal{J}\}
\quad \text{and} \quad
c = 2 \max_{j \in \mathcal{J}}
\left[
\dfrac{2L_j+\overline{B}}{1-\alpha}
+ \dfrac{2 n^{\tfrac{1-\beta_j}{2}} M_j + 2M_j}{(\beta_j+1)(1-\alpha)}
\right]^{\tfrac{1}{\beta_j}}.
\]
\end{theorem}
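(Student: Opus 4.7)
The plan is to combine the acceptance inequality from Step~4 of Algorithm~\ref{algfree} with an explicit upper bound on the regularization parameter $\sigma_k$ at the moment the test \eqref{eq.te} is satisfied. Since $x^{k+1}$ is produced by the algorithm, the descent test must have passed at a finite value of $\sigma_k$, so I would begin by simply recording
\begin{equation*}
F_j(x^{k+1}) \leq F_j(x^k) - \frac{\alpha \epsilon^2}{2\sigma_k}, \qquad j \in \mathcal{J}.
\end{equation*}
All that then remains is to show $\sigma_k \leq c\,\epsilon^{(\beta-1)/\beta}$, after which the algebraic identity $2-(\beta-1)/\beta = (\beta+1)/\beta$ delivers the claim immediately.

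Next, I would invoke Theorem~\ref{bdef} to control $\sigma_k$. Denoting
\begin{equation*}
\sigma^*(\epsilon) \coloneqq \max_{j\in\mathcal{J}}\left[\tfrac{2L_j+\overline{B}}{1-\alpha} + \tfrac{2n^{(1-\beta_j)/2}M_j+2M_j}{(\beta_j+1)(1-\alpha)}\,\epsilon^{\beta_j-1}\right]^{1/\beta_j},
\end{equation*}
the theorem guarantees acceptance as soon as $\sigma_k \geq \sigma^*(\epsilon)$. Since Step~4 only doubles the regularization upon failure and passes the accepted value to the next outer iteration, the standard doubling argument yields $\sigma_k \leq 2\sigma^*(\epsilon)$ at acceptance: the value immediately before the final doubling is strictly below $\sigma^*(\epsilon)$.

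The core technical step is to rewrite $\sigma^*(\epsilon)$ in the form dictated by the constant $c$ in the statement. Since $\epsilon \in (0,1)$ and $\beta_j \in (0,1]$, one has $\epsilon^{\beta_j-1} \geq 1$, so each bracket in $\sigma^*(\epsilon)$ is bounded by $a_j\,\epsilon^{\beta_j-1}$, where $a_j$ is precisely the bracket that appears in the definition of $c$. Raising to the $1/\beta_j$ power yields a factor $\epsilon^{(\beta_j-1)/\beta_j}$, and the key remaining observation is that $(\beta_j-1)/\beta_j = 1 - 1/\beta_j$ is monotonically increasing in $\beta_j$; hence, with $\epsilon<1$, the factor $\epsilon^{(\beta_j-1)/\beta_j}$ is maximized at the smallest index $\beta_j = \beta$. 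Consequently
\begin{equation*}
\sigma^*(\epsilon) \leq \Bigl(\max_{j\in\mathcal{J}} a_j^{1/\beta_j}\Bigr)\,\epsilon^{(\beta-1)/\beta} = \tfrac{c}{2}\,\epsilon^{(\beta-1)/\beta},
\end{equation*}
which combined with $\sigma_k \leq 2\sigma^*(\epsilon)$ and substituted into the descent inequality completes the proof.

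The hard part, as is typical in mixed Hölder--Lipschitz complexity proofs, will be tracking the exponents of $\epsilon$ cleanly through the maximum over $j$. Each component $f_j$ contributes its own factor $\epsilon^{(\beta_j-1)/\beta_j}$, and one must explicitly justify reducing to the worst-case exponent $\beta = \min_j \beta_j$. The monotonicity of $1 - 1/\beta_j$ handles this, but it must be invoked carefully; otherwise one is left with the coarser bound $\max_j a_j^{1/\beta_j}\,\epsilon^{(\beta_j-1)/\beta_j}$, which does not factor into the clean product form required by the statement.
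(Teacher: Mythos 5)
Your proof is correct and follows essentially the same route as the paper's: bound $\sigma_k$ at acceptance by $c\,\epsilon^{(\beta-1)/\beta}$ via Theorem~\ref{bdef} and the doubling mechanism, then substitute into the Step~4 descent inequality \eqref{deceps}. You in fact supply two details the paper glosses over (the factor-of-two doubling bound and the reduction of the exponent to the worst case $\beta=\min_{j}\beta_j$ using $\epsilon^{\beta_j-1}\ge 1$); the only wrinkle is that your threshold $\sigma^*(\epsilon)$ uses $2L_j$ where the statement of Theorem~\ref{bdef} has $5L_j$ --- an inconsistency internal to the paper itself, whose own proofs of Lemma~\ref{condsuf01} and Theorem~\ref{bdef} also produce $2L_j$, matching the constant $c$ as defined here.
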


\begin{proof}
It is worth remembering that if 
\[
\sigma_k \geq 
\max_{j\in \mathcal{J}}
\left[
\dfrac{2L_j+\overline{B}}{1-\alpha}
+ \dfrac{2 n^{\tfrac{1-\beta_j}{2}} M_j+2M_j}{(\beta_j+1)(1-\alpha)}\,\epsilon^{\beta_j-1}
\right]^{\tfrac{1}{\beta_j}}
\]
and the method does not stop at Step~3, then the decrease in the function value at each $F_j$ is always achieved.  

Therefore, we can conclude that $\sigma_k \leq \sigma_{\max}$, where
\begin{align*}
\sigma_{\max} 
&= 2\,\epsilon^{\tfrac{\beta-1}{\beta}}
\max_{j\in \mathcal{J}}
\left[
\dfrac{2L_j+\overline{B}}{1-\alpha}
+ \dfrac{2 n^{\tfrac{1-\beta_j}{2}} M_j+2M_j}{(\beta_j+1)(1-\alpha)}
\right]^{\tfrac{1}{\beta_j}}.
\end{align*}
This implies that $\rho_{\max}= c\epsilon^{\tfrac{\beta-1}{\beta}}$.  
By substituting $\rho_{\max}$ in \eqref{deceps}, we conclude that 
\begin{align*}	
F_j(x^{k+1})
&\leq F_j(x^k)-\dfrac{\alpha}{2c}\,\epsilon^{\tfrac{\beta+1}{\beta}},\quad j\in \mathcal{J}.
\end{align*}
This completes the proof of \eqref{eq0001.te}.
\end{proof}

In the inequality \eqref{eq0001.te} of Theorem~\ref{tedec1}, we can observe that, as $\beta$ approaches zero, the expected reduction in each $F_j$ decreases. Therefore, it is expected that the smaller $\beta$ is, the greater will be the number of iterations required to reach an approximate solution with a given tolerance.

Next, we demonstrate one of the main results of this section, showing that the number of iterations required to reach an approximate solution with a given tolerance is bounded by a multiple of $\epsilon^{-\tfrac{\beta+1}{\beta}}$.  
Theorems~\ref{complexity} and~\ref{te.3}, presented below, are adaptations of Theorems~4.2 and~4.3 presented in \cite{amaral2025complexity}.

\begin{theorem}\label{complexity}
Assume that Assumptions \ref{A2}, \ref{a2}, and \ref{sup.003} hold. Given $\overline{F}\in\R^m$, the maximum number of iterations for which $F(x^k)> \overline{F}$ and $\sigma_k \|\bar{x}^k-x^k\|\geq\epsilon$ is
\begin{equation*}\label{complex1}
	\displaystyle\max_{j\in \mathcal{J}}\left\{F_j(x^{0})-\overline{F}_j\right\}\dfrac{2c}{\alpha}\epsilon^{-\tfrac{\beta+1}{\beta}},
\end{equation*}
where $c$ and $\beta$ are the same as those given in Theorem~\ref{tedec1}.
\end{theorem}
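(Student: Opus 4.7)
The plan is to obtain the bound by a straightforward telescoping argument that converts the per-iteration functional decrease of Theorem~\ref{tedec1} into a ceiling on the admissible number of iterations. Concretely, Theorem~\ref{tedec1} already tells us that whenever the algorithm does not stop at Step~3 (i.e.\ $\sigma_k\|\bar{x}^k-x^k\|\geq\epsilon$), every component of the objective satisfies
$$
F_j(x^{k+1})\leq F_j(x^k)-\frac{\alpha}{2c}\,\epsilon^{\tfrac{\beta+1}{\beta}},\qquad j\in\mathcal{J}.
$$
This is the only ingredient I would need; the rest is purely arithmetic.

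First, I would fix an arbitrary index $j\in\mathcal{J}$ and telescope the inequality above from iteration $0$ to iteration $k$, obtaining
$$
F_j(x^{k})\leq F_j(x^0)-k\cdot\frac{\alpha}{2c}\,\epsilon^{\tfrac{\beta+1}{\beta}}.
$$
Next, assuming for contradiction that the hypotheses $F(x^k)>\overline{F}$ and $\sigma_k\|\bar{x}^k-x^k\|\geq\epsilon$ hold for all $k=0,1,\dots,K$ with $K$ strictly greater than the quantity in \eqref{complex1}, I would apply the displayed bound to each $j$ separately. Since $K\geq (F_j(x^0)-\overline{F}_j)\,\tfrac{2c}{\alpha}\epsilon^{-(\beta+1)/\beta}$ for every $j\in\mathcal{J}$ (because that quantity is at most the maximum over $j$), the telescoped inequality yields $F_j(x^K)\leq\overline{F}_j$ for every $j$, hence $F(x^K)\leq\overline{F}$, contradicting the standing hypothesis $F(x^K)>\overline{F}$.

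The only non-obvious bookkeeping issue is making sure that the per-iteration decrease in Theorem~\ref{tedec1} is indeed available at every iteration in the range under consideration. I would resolve this by noting that $x^{k+1}$ is produced by the algorithm only after Step~4 is accepted, and Theorem~\ref{bdef} guarantees that Step~4 is reached once $\sigma_k$ is large enough, so each successful iteration contributes the decrement $\tfrac{\alpha}{2c}\epsilon^{(\beta+1)/\beta}$. I expect no genuine obstacle here: the complete argument reduces to one telescoping estimate plus the elementary observation that $\max_{j}\{F_j(x^0)-\overline{F}_j\}$ majorizes $F_j(x^0)-\overline{F}_j$ for each $j$, which is precisely the ingredient needed to force $F(x^K)\leq\overline{F}$ componentwise and obtain the contradiction.
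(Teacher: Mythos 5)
Your proposal is correct and follows essentially the same route as the paper: both arguments telescope the per-iteration decrease $F_j(x^{k+1})\leq F_j(x^k)-\tfrac{\alpha}{2c}\epsilon^{\tfrac{\beta+1}{\beta}}$ from Theorem~\ref{tedec1} against the standing hypothesis $F(x^k)>\overline{F}$ to bound $k$. The only cosmetic difference is that you phrase the conclusion as a proof by contradiction while the paper derives the bound on $k$ directly, which is merely the contrapositive of the same estimate.
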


\begin{proof} 
The case in which $\overline{F}$ does not satisfy $F(x^0)> \overline{F}$ is trivial. It remains to analyze the case in which $F(x^0)> \overline{F}$. In Theorem~\ref{tedec1}, we proved that if Algorithm~\ref{algfree} does not meet the established stopping criterion at iteration $k$, then the condition
\begin{equation*}\label{eq01.te} 
F_j(x^{i}) \leq F_j(x^{i-1}) - \dfrac{\alpha}{2c}\,\epsilon^{\tfrac{\beta+1}{\beta}}, 
\quad j\in \mathcal{J},
\end{equation*}
holds for every $i=1,2,\ldots,k$.  
Hence, if $F(x^k) > \overline{F}$, we have that
\[
\overline{F}_j < F_j(x^{k-1}) - \dfrac{\alpha}{2c}\,\epsilon^{\tfrac{\beta+1}{\beta}},
\]
\[
\overline{F}_j < F_j(x^{k-2}) - 2\dfrac{\alpha}{2c}\,\epsilon^{\tfrac{\beta+1}{\beta}},
\]
\[
\vdots
\]
\[
\overline{F}_j < F_j(x^{0}) - k\dfrac{\alpha}{2c}\,\epsilon^{\tfrac{\beta+1}{\beta}}.
\]
This inequality implies that
\[
k \leq \dfrac{2c\big(F_j(x^{0})-\overline{F}_j\big)}{\alpha}\,
\epsilon^{-\tfrac{\beta+1}{\beta}}, 
\quad j\in \mathcal{J}.
\]
This concludes the proof of the theorem.
\end{proof}

It is important to note that, in Theorem~\ref{complexity}, we do not consider the number of test points rejected in Step~4, that is, the number of evaluations of the function $F$. This aspect will be properly addressed in Theorem~\ref{te.3} below, which provides an upper bound for the number of evaluations of $F$ required for the method to reach an approximate solution with a given tolerance.

\begin{theorem}\label{te.3}
	Suppose that the conditions of Theorem~\ref{complexity} hold. Algorithm~\ref{algfree} uses at most the following number of evaluations of $F_j$ and its subdifferential:
	\begin{equation*}\label{03.1} \displaystyle\max_{j\in \mathcal{J}}\left\{F_j(x^{0})-\overline{F}_j\right\}\dfrac{2c}{\alpha}\epsilon^{-\tfrac{\beta+1}{\beta}}+\log_2\bigg(\dfrac{\sigma_{\max}}{\sigma_{\min}}\bigg),
	\end{equation*}
	where
	
\[\sigma_{\max}= 2\epsilon^{\tfrac{\beta-1}{\beta}} \displaystyle\max_{j\in \mathcal{J}}\bigg[\dfrac{2L_j+\overline{B}}{1-\alpha}+\dfrac{2 n^{\tfrac{1-\beta_j}{2}} M_j+2M_j}{(\beta_j+1)(1-\alpha)}\bigg]^{\tfrac{1}{\beta_j}},\] with $c$ and $\beta$ the same as in Theorem~\ref{tedec1}.	
\end{theorem}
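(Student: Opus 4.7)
The plan is to partition every outer pass through Steps~1--4 of Algorithm~\ref{algfree} into two categories: \emph{successful} iterations, where the descent test \eqref{eq.te} is satisfied and we set $x^{k+1}\coloneqq\bar{x}^k$ with $\sigma_{k+1}\coloneqq\sigma_k$, and \emph{unsuccessful} iterations, where \eqref{eq.te} fails and we replace $\sigma_k\leftarrow 2\sigma_k$ before returning to Step~1. Each outer pass consumes exactly one evaluation of $F_j$ (and, because $w_j^k\in\partial h_j(\bar{x}^k)$ is extracted from the subproblem's KKT conditions, one subdifferential evaluation) for each $j\in\mathcal{J}$, so the total count of such evaluations equals the number of successful iterations plus the number of unsuccessful ones.

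The successful iterations are already controlled by Theorem~\ref{complexity}: as long as $\sigma_k\|\bar{x}^k-x^k\|\geq\epsilon$ and $F(x^k)>\overline{F}$, their number is bounded by the expression \eqref{complex1}, which matches the first term of \eqref{03.1}. So the first task is simply to invoke that theorem and carry its bound over verbatim. To bound the unsuccessful iterations, I would use Theorem~\ref{bdef}: once
\[
\sigma_k\geq\max_{j\in\mathcal{J}}\bigg[\dfrac{5L_j+\overline{B}}{1-\alpha}+\dfrac{2 n^{(1-\beta_j)/2} M_j+2M_j}{(\beta_j+1)(1-\alpha)}\epsilon^{\beta_j-1}\bigg]^{1/\beta_j},
\]
the test \eqref{eq.te} is automatically satisfied, so no further doubling can occur. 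Because Step~4 doubles $\sigma_k$ only upon failure, the largest value $\sigma_k$ can attain is at most twice this threshold, which, after applying $\beta_j\geq\beta$ and simplifying the exponent of $\epsilon$, is exactly the quantity $\sigma_{\max}=c\,\epsilon^{(\beta-1)/\beta}$ identified in the proof of Theorem~\ref{tedec1}.

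With this uniform upper bound on $\sigma_k$ in hand, the key bookkeeping step is to observe that $\sigma$ only ever grows: it is non-decreasing across successful iterations (where $\sigma_{k+1}=\sigma_k$) and doubles at each unsuccessful iteration. Starting from $\sigma_0=\sigma_{\min}$, if $N_u$ denotes the total number of unsuccessful iterations across the whole run, then $\sigma_{\min}\cdot 2^{N_u}\leq\sigma_{\max}$, so $N_u\leq\log_2(\sigma_{\max}/\sigma_{\min})$, which yields exactly the second term of \eqref{03.1}. Adding the two contributions completes the proof.

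The main obstacle I anticipate is the accounting step that turns the local threshold of Theorem~\ref{bdef} into a global upper bound on $\sigma_k$ valid for \emph{every} iteration, and then reading off the correct exponent $(\beta-1)/\beta$ after passing from the iteration-dependent $\epsilon^{\beta_j-1}$ to the uniform $\beta=\min_j\beta_j$; this is precisely the calculation already carried out in Theorem~\ref{tedec1}, so I would cite that argument rather than redo it, and simply remark that $\sigma_{\min}=\sigma_0$ is the value provided at initialization.
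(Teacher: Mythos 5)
Your proposal is correct and follows essentially the same route as the paper's proof: bound the number of successful iterations by Theorem~\ref{complexity}, bound the number of $\sigma$-doublings by $\log_2(\sigma_{\max}/\sigma_{\min})$ via $2^{N_u}\sigma_{\min}\leq\sigma_{\max}$, and add the two counts. Your version is slightly more explicit about where the factor of $2$ in $\sigma_{\max}$ comes from (overshoot by doubling) and about the successful/unsuccessful partition, but the argument is the same.
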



\begin{proof}
	Since $\sigma \leq \sigma_{\max}$, the parameter $\sigma$ is increased a finite number of times. Let $r$ be the maximum number of times that $\sigma$ is increased. Then, we have:
	$$2^r\sigma_{\min} \leq \sigma_{\max} \quad \text{which is equivalent to} \quad 2^r \leq \dfrac{\sigma_{\max}}{\sigma_{\min}}.$$
	This implies that:
	$$r = \log_2(2^r) \leq \log_2\bigg(\dfrac{\rho_{\max}}{\rho_{\min}}\bigg).$$
	From Theorem \ref{complexity}, we know that the maximum number of iterations is upper bounded by:
	\[\displaystyle\max_{j\in \mathcal{J}}\left\{F_j(x^{0})-\overline{F}_j\right\}\dfrac{2c}{\alpha}\epsilon^{-\tfrac{\beta+1}{\beta}}\]
	Therefore, the number of evaluations of $F_j$ and its subdifferentials employed by Algorithm~\ref{algfree} is upper bounded by:
	$$\displaystyle\max_{j\in \mathcal{J}}\left\{F_j(x^{0})-\overline{F}_j\right\}\dfrac{2c}{\alpha}\epsilon^{-\tfrac{\beta+1}{\beta}}+ \log_2\bigg(\dfrac{\sigma_{\max}}{\sigma_{\min}}\bigg).$$
	This completes the proof.
\end{proof}	

It is relevant to compare Theorems~\ref{complexity} and~\ref{te.3} with other complexity results in the literature, in which  H\"{o}lder continuity is imposed on the gradients. In this case, the complexity obtained in Theorem~\ref{complexity} is $\mathcal{O}(\epsilon^{-\frac{\beta+1}{\beta}})$ for the number of iterations. This is the same complexity as the method presented in \cite{Lizet} for $p=1$ (although this method was also designed for constrained problems), as shown in \cite[Theorem~2.3]{Lizet}. When $m=1$, we recover the complexity established in the theory for models with quadratic regularizations and H\"{o}lder conditions imposed on the gradient of the objective function, see for example \cite{Martinez2017, Yashtini2016}. In summary, in this work we obtain complexity results compatible with those established in both multiobjective optimization theory and scalar optimization.

\section{Applications and Numerical Results}\label{sec5}

In this section, we present numerical experiments to illustrate the applicability of Algorithm~\ref{algfree} (PDFPM) to multiobjective optimization problems. We are particularly interested in assessing the effectiveness of the algorithm when applied to composite problems \eqref{composite}, where $f_j: \mathbb{R}^n \to \mathbb{R}$ exhibits H\"{o}lder regularity for some objective $j \in \J$, and $h_j : \mathbb{R}^n \to \mathbb{R} \cup \{+\infty\}$ is convex but not necessarily differentiable. A direct application of composite problems is robust multiobjective optimization, which will be discussed in Subsection~\ref{subsec_robust_aplic}. For further details on the wide range of applications involving the minimization of composite problems, see \cite{fukuda2019, Chen2019} and the references therein.

The numerical experiments, which will be presented in Subsection~\ref{sub_sec_numerical}, were conducted using the Julia programming language (version 1.11.3) on a machine equipped with an AMD Ryzen 5 processor and 32 GB of RAM. We used standard packages for nonlinear and linear optimization. The source code is available at \url{https://github.com/Souza-DR/AAS2025-PDFreeMO}.

\subsection{Application to Robust Multiobjective Optimization}\label{subsec_robust_aplic}

Let $f_j: \mathbb{R}^n \to \mathbb{R}$ be differentiable for all $j \in \mathcal{J}$. The multiobjective optimization problem with uncertainty is defined as follows:
\begin{equation}\label{uncertain}
    \min_{x \in \R^n} (f_1(x) + \hat{h}_1(x, z), \ldots, f_m(x) + \hat{h}_m(x, z)),
\end{equation}
where, for each $j \in \mathcal{J}$, $z \in \mathcal{Z}_j$ is a deterministic uncertain parameter, $\mathcal{Z}_j$ is an uncertainty set that can be estimated, and $\hat{h}_j : \mathbb{R}^n\times\mathbb{R}^n \to \mathbb{R} \cup \{+\infty\}$ is convex with respect to the first argument. The robust version of problem \eqref{uncertain} is given by:
\begin{equation}\label{robust:prob}
     \min_{x \in \R^n} (f_1(x) + \max_{z\in \mathcal{Z}_1}\hat{h}_1(x, z), \ldots, f_m(x) + \max_{z\in \mathcal{Z}_m}\hat{h}_m(x, z)).
\end{equation}
If we define $h_j(x) \coloneqq \max_{z\in \mathcal{Z}_j}\hat{h}_j(x, z)$ for all $j \in \mathcal{J}$, then Problem \eqref{P1} becomes a robust optimization problem in which the objectives are affected by uncertainties. For further details on the robust formulation of uncertain problems, see \cite{Chen2019, fukuda2019} and the references therein.

In the proof of Lemma~\ref{BD}, we saw that the solution to the subproblem \eqref{probminmax} is equivalent to solving the following constrained optimization problem:
\begin{equation}\label{probztau}
\begin{aligned}
\min_{x \in \mathbb{R}^n,\, \tau \in \mathbb{R}} \quad & \tau + \frac{\sigma_k}{2} \|x - x^k\|^2 \\
\text{s.t.} \quad & \langle g_{f_j}(x^k, \lambda_k), x - x^k \rangle 
+ \frac{1}{2} \langle B_j^k (x - x^k), x - x^k \rangle \\
& + h_j(x) - h_j(x^k) \leq \tau, \quad \forall j \in \mathcal{J}.
\end{aligned}
\end{equation}
Note that, for $j \in \mathcal{J}$, even if $\hat{h}_j(\cdot,\cdot)$ is differentiable, $h_j(\cdot)$  is not necessarily differentiable. In general, the term $h_j(\cdot)$ is not easily computable, which makes solving subproblem \eqref{probztau} particularly challenging. However, when $\hat{h}_j(\cdot,\cdot)$ and $\mathcal{Z}_j$ have a special structure, it is possible to simplify the resolution of subproblem \eqref{probztau} by applying duality theory.

According to the framework presented in \cite{fukuda2019}, for each $j \in \mathcal{J}$, suppose that $\hat{h}_j(x, z) \coloneqq \langle x, z \rangle$ (linear with respect to the first argument) and $\mathcal{Z}_j = \{ z \in \mathbb{R}^n \mid A_j z \leq b_j \}$ (a polyhedron), where $A_j \in \mathbb{R}^{d \times n}$ and $b_j \in \mathbb{R}^{d}$. In this case, we can rewrite $h_j(x) = \max_{z \in \mathcal{Z}_j} \langle x, z \rangle$ as the following linear programming problem:
\begin{equation*}
\begin{aligned}
\max_{z} \quad & \langle x, z \rangle \\
\text{s.t.} \quad & A_j z \leq b_j.
\end{aligned}
\end{equation*}
Its dual problem is given by
\begin{equation*}
\begin{aligned}
\min_{w} \quad & \langle b_j, w \rangle \\
\text{s.t.} \quad & A_j^\top w = x, \\
& w \geq 0.
\end{aligned}
\end{equation*}

By applying strong duality theory, we conclude that solving Problem \eqref{probminmax} is equivalent to solving Problem \eqref{probztau}, which in turn can be reformulated as the following quadratic programming problem:
\begin{equation}\label{probztaudual}
\begin{aligned}
\min_{x \in \mathbb{R}^n,\, \tau \in \mathbb{R},\, w_j} \quad & \tau + \frac{\sigma_k}{2} \|x - x^k\|^2 \\
\text{s.t.} \quad 
& \langle g_{f_j}(x^k, \lambda_k), x - x^k \rangle 
+ \frac{1}{2} \langle B_j^k(x - x^k), x - x^k \rangle \\
& + \langle b_j, w_j \rangle - h_j(x^k) \leq \tau, \quad \forall j \in \mathcal{J}, \\
& A_j^\top w_j = x, \quad \forall j \in \mathcal{J}, \\
& w_j \geq 0, \quad \forall j \in \mathcal{J}.
\end{aligned}
\end{equation}
Thus, for specific structures of $h_j(\cdot)$ and $\mathcal{Z}_j$, the subproblem arising in Algorithm~\ref{algfree} becomes tractable. Moreover, this reformulation highlights how duality theory provides not only a theoretical characterization but also a practical computational advantage, since quadratic programming is a well-studied class of problems for which efficient solvers are available. This observation reinforces the applicability of our method in robust multiobjective optimization settings, closing the gap between abstract formulations and implementable algorithms.

\subsection{Numerical Results}\label{sub_sec_numerical}
In the tests conducted with the PDFPM method, we used the following parameters:
$\alpha = 0.1$, $\epsilon = 10^{-4}$, $\sigma_0 = 1.0$, and $B_j^0 = I_n$ for all
$j \in \mathcal{J}$. We also employed quasi-Newton matrices $B_j^k$ updated by
BFGS-type schemes, one matrix per objective function. In the scalar case, it is well known (see \cite{nocedal}) that,
given a symmetric matrix $B_j^k > 0$, the classical BFGS update preserves positive
definiteness, i.e., $B_j^{k+1} > 0$, provided that the curvature condition
$(s^k)^\top y_j^k > 0$ holds, where
$s^k \coloneqq x^{k+1} - x^k$ and
$y_j^k \coloneqq \nabla f_j(x^{k+1}) - \nabla f_j(x^k)$ for all $j \in \mathcal{J}$.
Such a condition can be enforced, for instance, if $f_j$ is strictly convex or if the
step length satisfies Wolfe conditions. In the multiobjective setting there is no
exact counterpart: if all components $f_j$ are strictly convex, then
$(s^k)^\top y_j^k > 0$ holds for each $j$, but without convexity assumptions, even a
multiobjective Wolfe line search \cite{PerezPrudente2019} does not guarantee that
the curvature condition is satisfied for every objective. Motivated by this difficulty,
\cite{Prudente2022} proposed a modified BFGS-type update that preserves positive
definiteness when the step length is obtained by a Wolfe line search, without assuming
convexity.

Since Algorithm~\ref{algfree} does not employ a line search, we combine this modified
BFGS update with a cautious strategy in order to maintain $B_j^k$ positive
definite. For each objective $j \in \mathcal{J}$, if the curvature condition
$(s^k)^\top y_j^k > 0$ holds, we perform the classical BFGS update
\[
B_j^{k+1} \coloneqq
B_j^{k}
-\frac{B_j^{k} s^k (s^k)^\top B_j^{k}}{(s^k)^\top B_j^{k} s^k}
+\frac{y_j^{k} (y_j^{k})^\top}{(y_j^{k})^\top s^k}.
\]
If $(s^k)^\top y_j^k \le 0$ but
\[
\rho_j^k \coloneqq
\max_{\ell \in \mathcal{J}}\big\{\nabla f_\ell(x^{k+1})^\top s^k\big\}
- \nabla f_j(x^k)^\top s^k > 0,
\]
then we update $B_j^k$ by the modified BFGS rule
\begin{align*}
	\begin{split}
		B_j^{k+1} ~\coloneqq ~
		&  B_j^{k}
		- \frac{\rho_j^k\, B_j^{k} s^k (s^k)^\top B_j^{k}}
		{\big(\rho_j^k - (s^k)^\top y_j^{k}\big)^2
		 + \rho_j^k (s^k)^\top B_j^k s^k} \\
		& + \frac{\big((s^k)^\top B_j^{k} s^k\big)\, y_j^{k} (y_j^{k})^\top}
		{\big(\rho_j^k - (s^k)^\top y_j^{k}\big)^2
		 + \rho_j^k (s^k)^\top B_j^k s^k}  \\
		& + \big(\rho_j^k - (s^k)^\top y_j^{k}\big)
		\frac{y_j^{k} (s^k)^\top B_j^{k}
		      + B_j^{k} s^k (y_j^{k})^\top}
		{\big(\rho_j^k - (s^k)^\top y_j^{k}\big)^2
		 + \rho_j^k (s^k)^\top B_j^k s^k}.
	\end{split}
\end{align*}
If neither $(s^k)^\top y_j^k > 0$ nor $\rho_j^k > 0$ holds, we simply set
$B_j^{k+1} \coloneqq B_j^k$, i.e., no update is performed. It is worth noting that the vectors $y_j^k$ are computed from the gradients of
$f_j$ in the Composite Problem~\eqref{composite}, and whenever $\nabla f_j$ is not
available in closed form these gradients are approximated by finite differences,
for all $j \in \mathcal{J}$, see Remark~\ref{diff}.
The maximum number of iterations allowed was set to $100$, and a run was considered unsuccessful whenever no point satisfying the stopping criterion was obtained within this limit; see Step~3 of Algorithm~\ref{algfree}.

Considering the discussion in Subsection~\ref{subsec_robust_aplic}, we now apply the PDFPM method to a robust optimization problem. For each test problem and each objective $j$, we generate a nonsingular matrix $\Tilde{A}_j \in \R^{n \times n}$ with entries sampled uniformly in $[0,1]$ and keep it fixed across all uncertainty levels. Then, for a given $\delta \ge 0$, we define
\begin{equation}\label{maxh}
h_j(x) = \max_{z \in \mathcal{Z}_j} \langle x, z \rangle \quad \text{and} \quad \mathcal{Z}_j = \{ z \in \R^n \mid -\delta e \leq \Tilde{A}_j z \leq \delta e \},
\end{equation}
where $e = (1, \ldots, 1)^\top \in \mathbb{R}^{n}$. By taking $A_j \coloneqq [\Tilde{A}_j; - \Tilde{A}_j] \in \mathbb{R}^{2n \times n}$ and $b_j \coloneqq (\delta, \ldots, \delta)^\top \in \mathbb{R}^{2n}$, we obtain a robust optimization problem as defined in \eqref{robust:prob}, where $\delta$ controls the level of uncertainty in the problem. In particular, $\delta = 0$ corresponds to the nominal (non-robust) case without uncertainty.

The presence of the pointwise maximization in \eqref{maxh} makes the function $h_j$, for each $j \in \mathcal{J}$, convex but not necessarily differentiable. Consequently, at each iteration of the algorithm we solve, for every objective $j \in \mathcal{J}$, one linear program of the form \eqref{maxh} in order to evaluate $h_j(x)$, together with a single quadratic program of the form \eqref{probztaudual}. To address these computational challenges, we employ HiGHS~\cite{HiGHS} to solve the linear programs \eqref{maxh} and Ipopt~\cite{Ipopt} to solve the quadratic subproblem \eqref{probztaudual}, an interior-point optimizer that efficiently handles nonlinear constraints through a primal–dual method with a filter line-search approach.

\subsubsection{H\"{o}lder Setting}\label{holderset}
To illustrate the applicability of the proposed algorithm, we define two biobjective functions with the following properties: the first vector function has one objective with a Lipschitz continuous gradient, while the other has a $(p-1)$-H\"{o}lder continuous gradient; the second vector function has both objectives with $(p-1)$-H\"{o}lder continuous gradients. See Examples~\ref{ex_1} and~\ref{ex_2} below. Note that, for $1 < p < 2$, the mapping $x \mapsto \|x\|_p^p$ has a gradient that is $(p-1)$-H\"older continuous. Consequently, in Example~\ref{ex_1} the gradient of $f_2$ is $(p-1)$-H\"older continuous, whereas in Example~\ref{ex_2} the gradients of $f_1$ and $f_2$ are $(p_1-1)$- and $(p_2-1)$-H\"older continuous, respectively. Hence both AAS1 and AAS2 satisfy the H\"older-type assumptions adopted in this work.

\begin{example}[AAS1]\label{ex_1}
    Let $F: \R^2 \to \R^2$ be defined by $F(x) = (f_1(x), f_2(x))$ with
\begin{align*}
f_1(x) = \frac{1}{2}\|A x - b\|_2^2, \quad f_2(x) = \frac{\mu}{p}\|D x\|_p^p,
\end{align*}
where $1 < p < 2$. The first objective exhibits a quadratic structure with strong convexity, while the second objective represents a $p$-norm regularization term with H\"{o}lder continuous gradients. The problem parameters are specified as follows: 
\[
A = \begin{bmatrix} 2.0 & 0.5 \\ 0.5 & 1.5 \end{bmatrix}, \quad b = [1.0, -0.5]^\top, \quad p = 1.003, \quad \mu = 0.9, \quad D = \begin{bmatrix} 1.0 & 0.8 \\ 0.3 & 1.2 \end{bmatrix}.
\]
\end{example}

\begin{example}[AAS2]\label{ex_2}
    Consider the following biobjective function:
\begin{align*}
f_1(x) = \frac{\mu_1}{p_1}\|D_1(x - c_1)\|_{p_1}^{p_1}, \quad f_2(x) = \frac{\mu_2}{p_2}\|D_2(x - c_2)\|_{p_2}^{p_2},
\end{align*}
where $1 < p_1, p_2 < 2$. Both objective functions are convex and possess H\"{o}lder continuous gradients. The problem parameters are specified as follows: 
\[
p_1 = 1.003, \quad \mu_1 = 1.2, \quad D_1 = \begin{bmatrix} 1.2 & -0.3 \\ 0.4 & 1.5 \end{bmatrix}, \quad c_1 = [1.5, -1.0]^\top,
\]
\[
p_2 = 1.002, \quad \mu_2 = 0.8, \quad D_2 = \begin{bmatrix} 1.8 & 0.5 \\ -0.2 & 1.1 \end{bmatrix}, \quad c_2 = [-1.2, 0.8]^\top.
\]
\end{example}

In the experiments, we considered different uncertainty scenarios in the minimization of the biobjective functions AAS1 and AAS2, namely $\delta \in \{0.0,~0.02,~0.05,~0.1\}$. For each fixed value of $\delta$, we generated $m$ matrices $A_j \in \mathbb{R}^{2n \times n}$, with $j \in \mathcal{J}$, thereby defining our robust Problem~\eqref{robust:prob}. For each problem instance, the algorithm was executed $200$ times with randomly selected starting points $x^0 \in \mathcal{X}$, where $\mathcal{X} \coloneqq [-2,2] \times [-2,2]$ for AAS1 and $\mathcal{X} \coloneqq [-5,5] \times [-5,5]$ for AAS2.

Figures~\ref{fig:pareto-AAS1} and~\ref{fig:pareto-AAS2} correspond to the case $\delta = 0$, that is, a scenario without uncertainty. The right panel illustrates the convergence behavior of the algorithm, highlighting its ability to approximate the true Pareto front. We also plot the initial point (in gray) and the final point (in red) of the optimization, connected by a dashed line. In addition, the left panel in both figures shows the complete set of objective function values obtained by evaluating the objectives on a uniform grid of points, thus providing a comprehensive view of the objective space landscape. Figure~\ref{fig:delta_comparison} presents the comparison of the reconstructions obtained for the other values of $\delta$. On the left, the approximations of the Pareto fronts obtained for AAS1 are shown, and on the right, those for AAS2. One observes that as the level of uncertainty increases, the approximation of the Pareto front produced by PDFPM deviates further from the expected front.

\begin{figure}[h]
\centering
\begin{minipage}{0.48\textwidth}
\centering
\includegraphics[width=\textwidth]{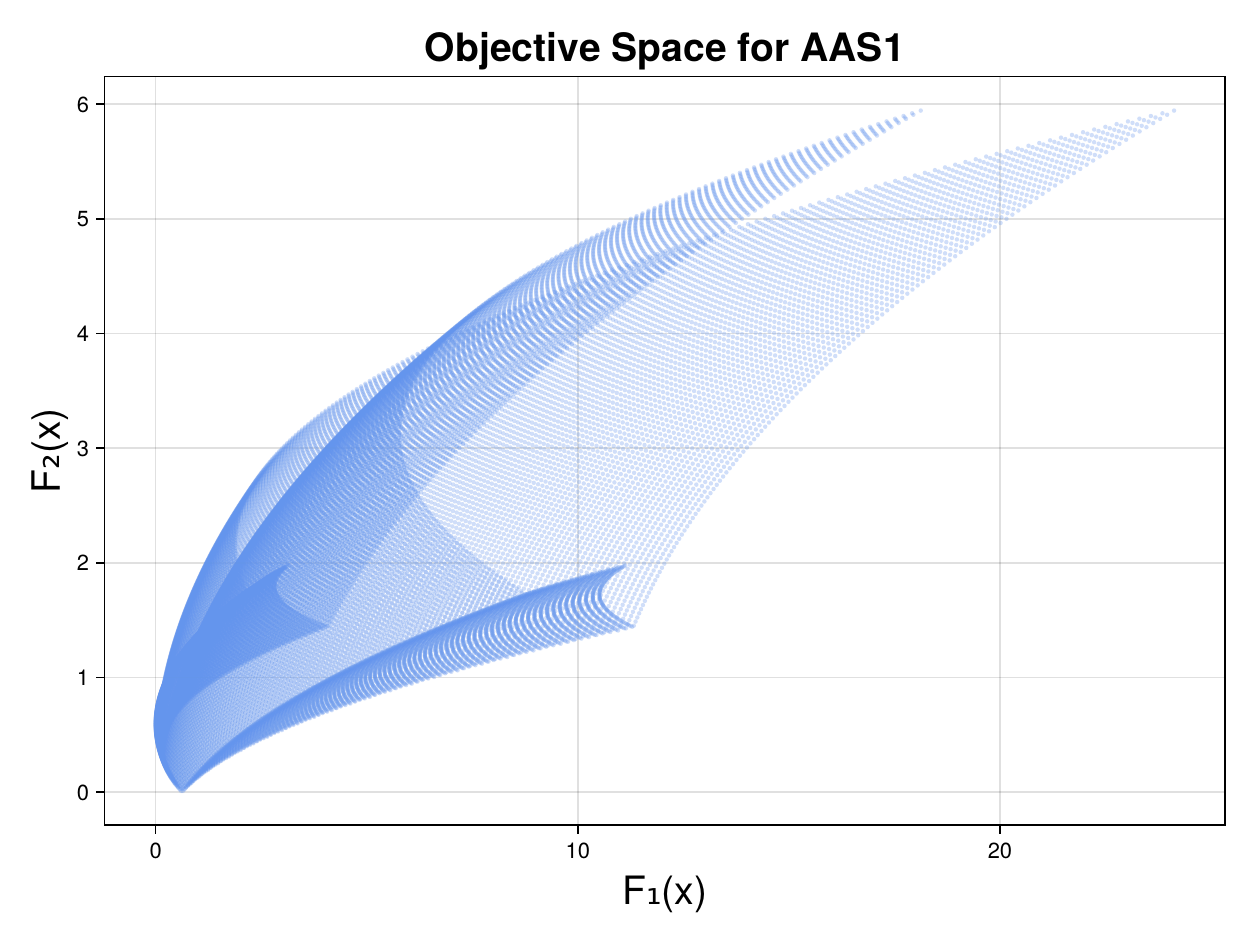}
\end{minipage}
\hfill
\begin{minipage}{0.48\textwidth}
\centering
\includegraphics[width=\textwidth]{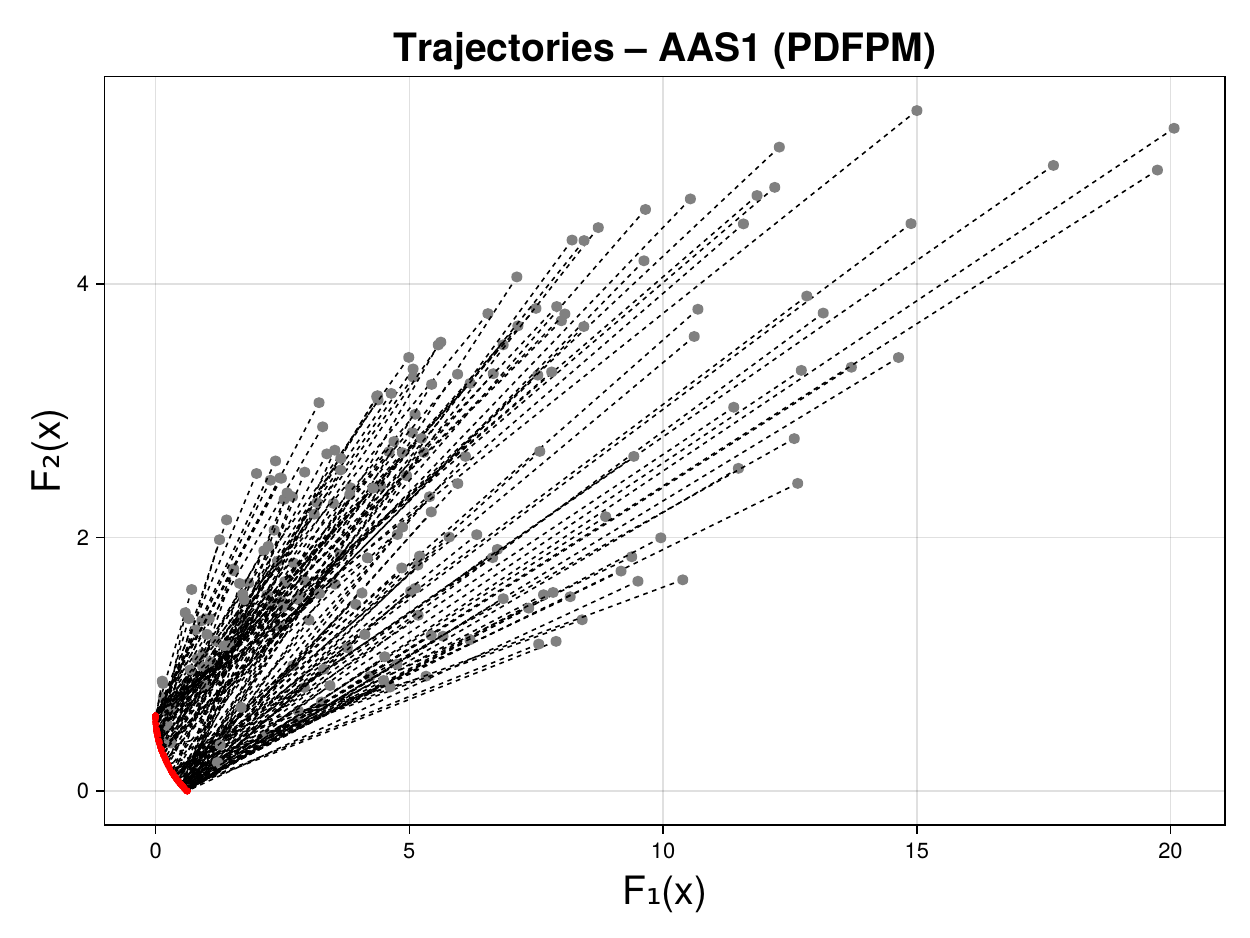}
\end{minipage}
\caption{Objective space and approximated Pareto front obtained by the PDFPM for the AAS1.}
\label{fig:pareto-AAS1}
\end{figure}

\begin{figure}[h]
\centering
\begin{minipage}{0.48\textwidth}
\centering
\includegraphics[width=\textwidth]{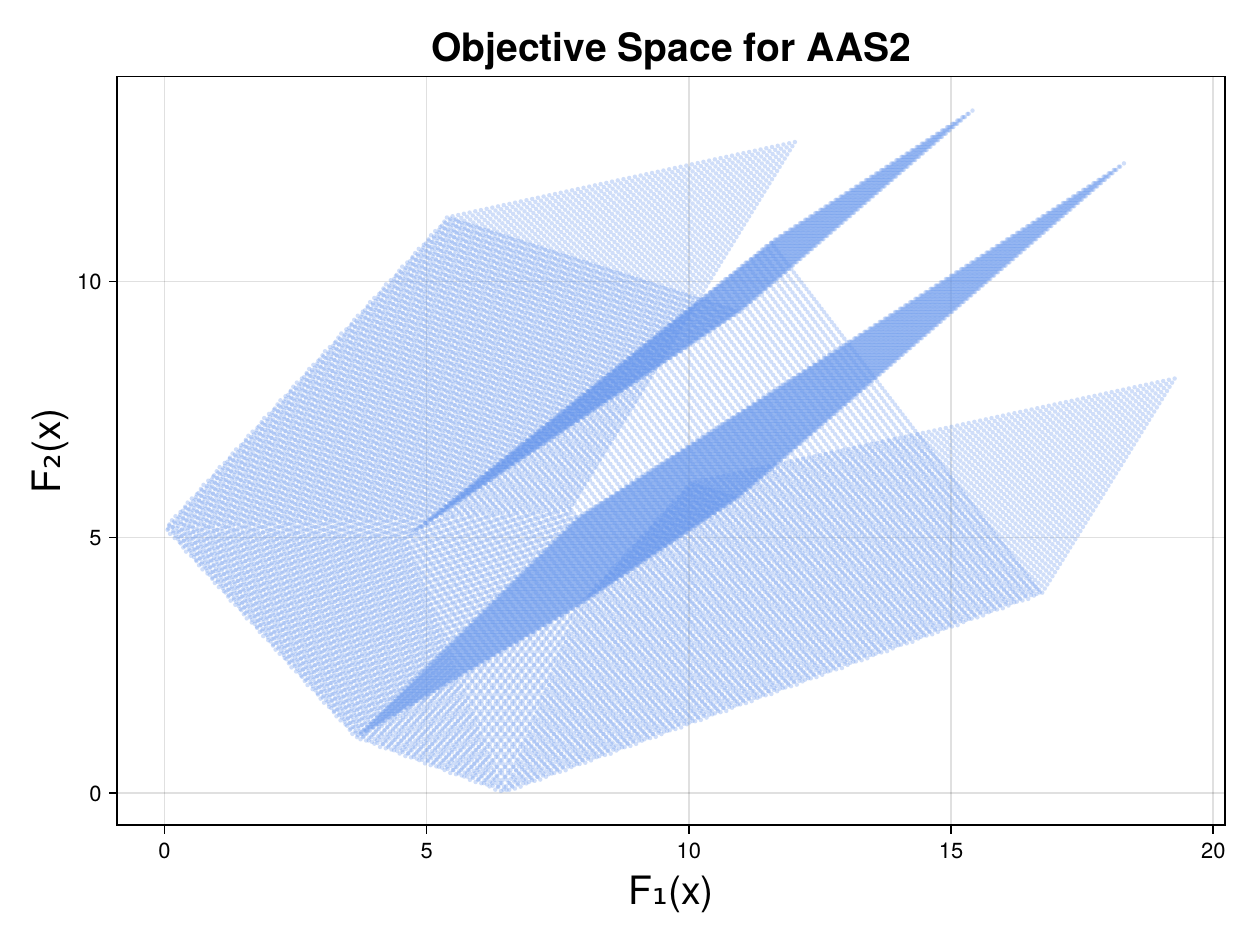}
\end{minipage}
\hfill
\begin{minipage}{0.48\textwidth}
\centering
\includegraphics[width=\textwidth]{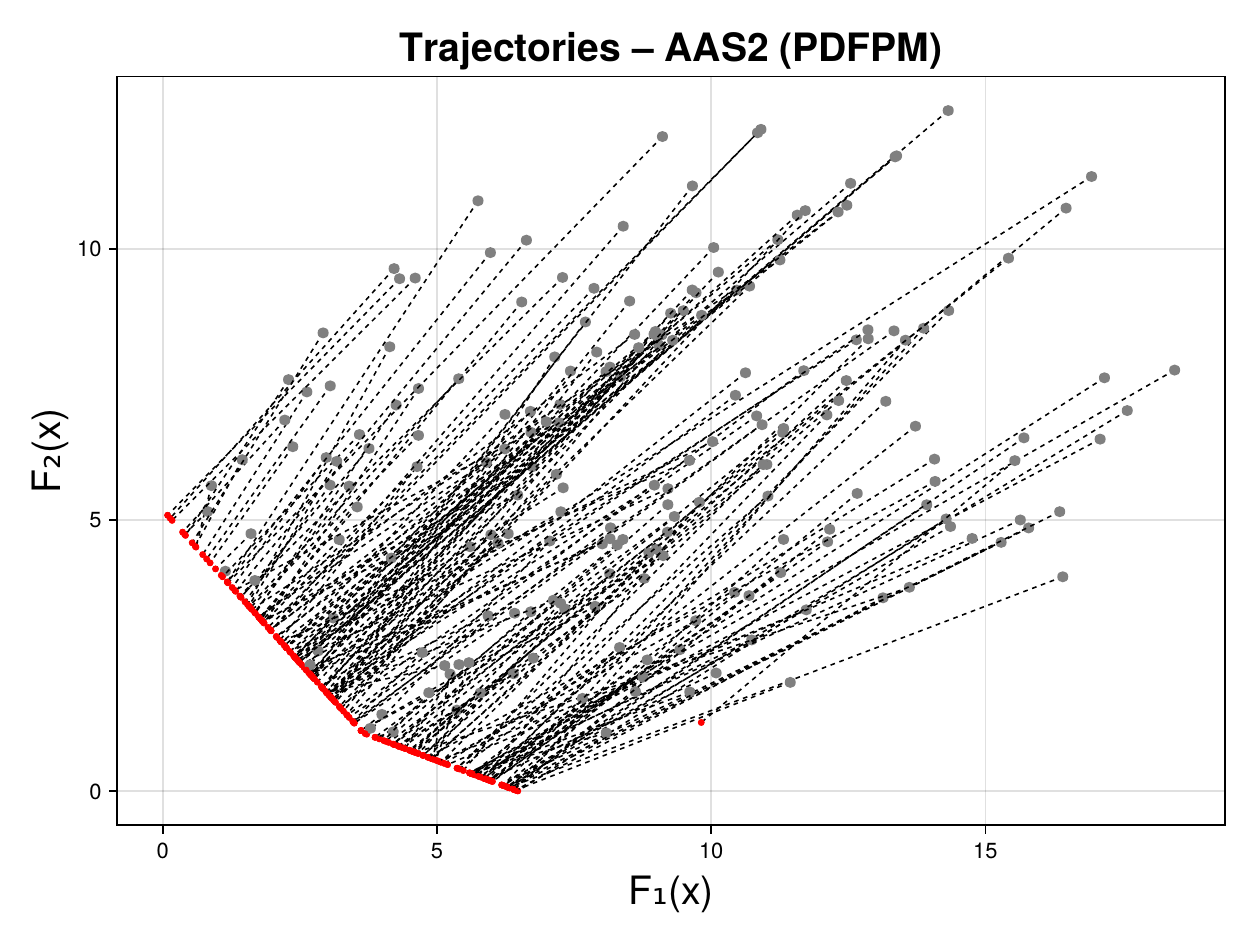}
\end{minipage}
\caption{Objective space and approximated Pareto front obtained by the PDFPM for the AAS2.}
\label{fig:pareto-AAS2}
\end{figure}

\begin{figure}[h]
\centering
\begin{minipage}{0.48\textwidth}
    \centering
    \includegraphics[width=\textwidth]{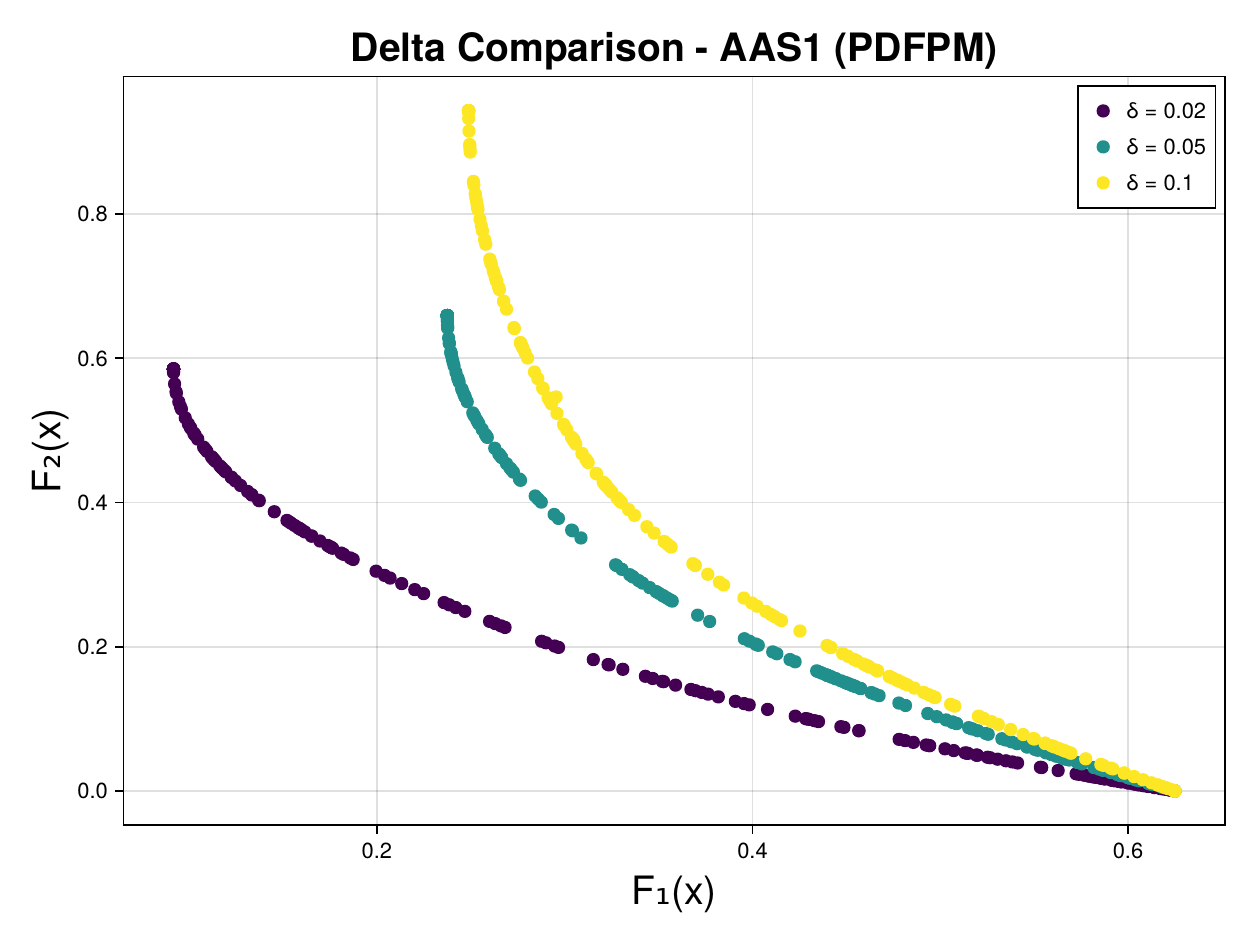}
\end{minipage}
\hfill
\begin{minipage}{0.48\textwidth}
    \centering
    \includegraphics[width=\textwidth]{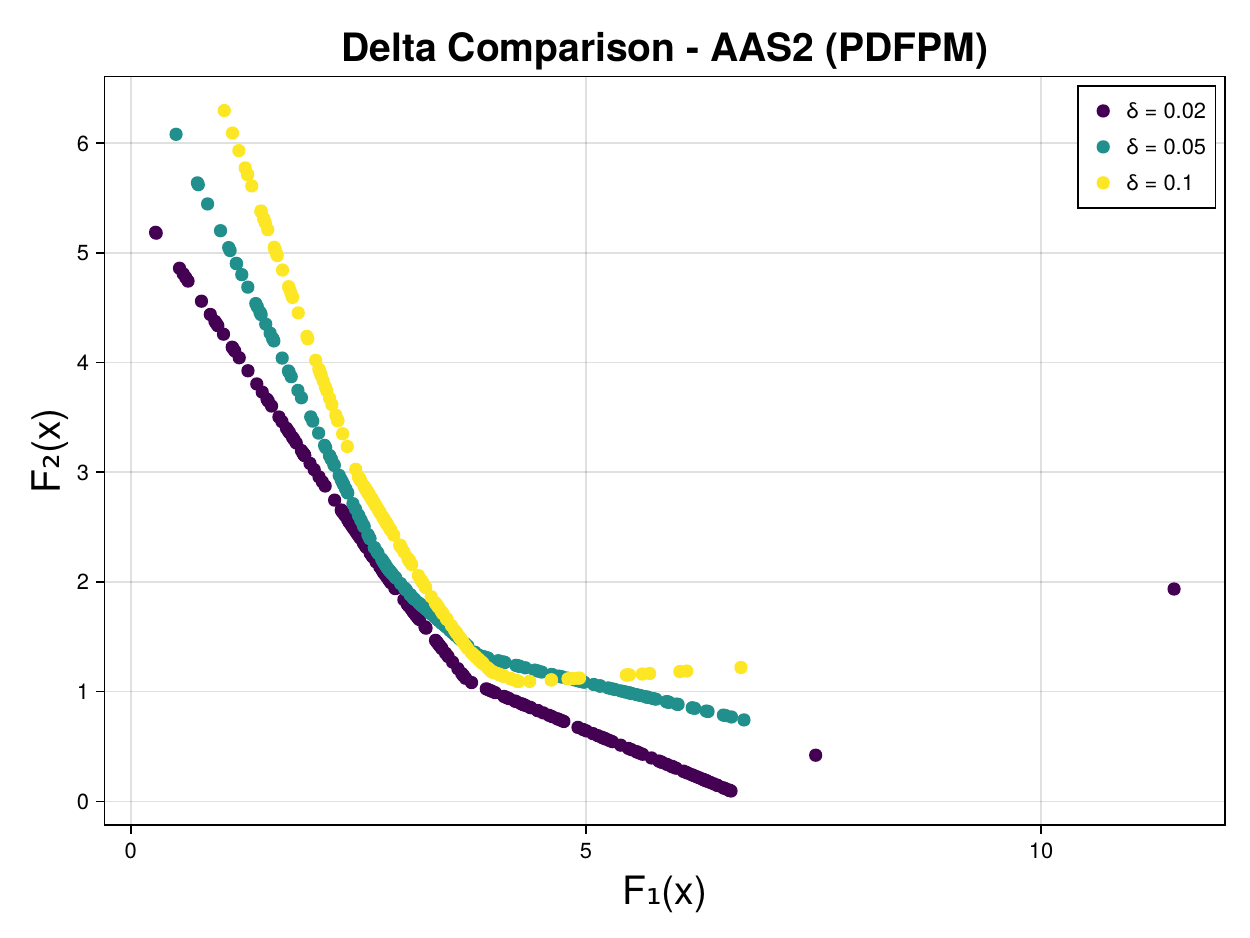}
\end{minipage}
\caption{Comparison between the complete objective space and the Pareto front approximations obtained by PDFPM for the AAS1 and AAS2 problems under different uncertainty levels.}
\label{fig:delta_comparison}
\end{figure}

To quantify the reliability of the PDFPM solver in the H\"older setting, we recorded, for each problem and each value of $\delta$, how many of the $200$ independent runs converged to a point satisfying the stopping criterion. Table~\ref{tab:pdfpm-results} reports the results for the AAS1 and AAS2 problems. For AAS1, between $195$ and $199$ runs were successful, which corresponds to success rates in the interval $97.5\%\text{ to }99.5\%$ across all uncertainty levels. For AAS2, the performance is even better: between $199$ and $200$ runs converged successfully for every tested value of $\delta$, yielding success rates between $99.5\%\text{ and }100\%$. Although problems with H\"older continuous gradients are typically harder to solve than their Lipschitz counterparts, these high success rates, with averages of approximately $98.6\%$ for AAS1 and $99.9\%$ for AAS2, suggest that PDFPM handles different uncertainty levels in a stable way and performs reliably on composite multiobjective problems with H\"older continuous objectives.

\begin{table}[h]
\centering
\caption{Number of solutions found by PDFPM (200 runs per setting).}
\label{tab:pdfpm-results}
\begin{tabular}{lcccc}
\hline
Problem & $\delta=0.0$ & $\delta=0.02$ & $\delta=0.05$ & $\delta=0.1$ \\
\hline
AAS1 & 196 (98\%) & 199 (99.5\%) & 199 (99.5\%) & 195 (97.5\%) \\
AAS2 & 199 (99.5\%) & 200 (100\%) & 200 (100\%) & 200 (100\%) \\
\hline
\end{tabular}
\end{table}

\subsubsection{Comparison with ProxGrad and CondG}

To assess the performance of Algorithm~\ref{algfree} on standard multiobjective test problems, we compare PDFPM with two first-order methods designed for composite multiobjective optimization. The first one is the proximal gradient method proposed in \cite[Algorithm~3.1]{fukuda2019}, which we refer to as ProxGrad. The second is the conditional gradient method introduced in \cite[Algorithm~1]{Assuncao2023}, denoted by CondG. Although ProxGrad, CondG, and Algorithm~\ref{algfree} are all tailored to composite multiobjective problems, there are important differences in the underlying subproblems, in the corresponding search directions, and in the stopping criteria.

As discussed in Section~\ref{subsec_robust_aplic}, Step~2 of Algorithm~\ref{algfree} requires the solution of the subproblem \eqref{probminmax}, which is equivalent to \eqref{probztau}. In an analogous way, ProxGrad requires the solution of the following subproblem
\begin{equation}\label{ProxGrad_subprob}
    \min_{x\in\R^n}\;
    \max_{j\in\mathcal{J}}
    \left[
      \langle\nabla f_j(x^k),x-x^k\rangle
      + h_j(x) - h_j(x^k)
    \right]
    + \frac{\ell}{2}\,\|x - x^k\|^2,
\end{equation}
where $\ell > 0$ is an algorithmic parameter. In our implementation the solution of \eqref{ProxGrad_subprob}, denoted by $\bar{x}^{k}_{pg}$, is computed with Ipopt~\cite{Ipopt}. The corresponding search direction is then given by \(d(x^k) = \bar{x}^{k}_{pg} - x^k,\)
and we denote the optimal value of \eqref{ProxGrad_subprob} at $\bar{x}^{k}_{pg}$ by $\theta_{pg}(x^k)$.

For CondG the subproblem is
\begin{equation}\label{CondG_subprob}
    \min_{x\in\R^n}\;
    \max_{j\in\mathcal{J}}
    \left[
      \langle\nabla f_j(x^k),x-x^k\rangle
      + h_j(x) - h_j(x^k)
    \right],
\end{equation}
and the search direction is defined as
\(d(x^k) = \bar{x}^{k}_{cg} - x^k,\)
where $\bar{x}^{k}_{cg}$ is a solution of \eqref{CondG_subprob}. The optimal value of \eqref{CondG_subprob} at $\bar{x}^{k}_{cg}$ is denoted by $\theta_{cg}(x^k)$. In our experiments, \eqref{CondG_subprob} is solved with HiGHS~\cite{HiGHS}. For both ProxGrad and CondG, a line search is performed along the direction $d(x^k)$. If $x^k$ is noncritical and $d(x^k)$ is a descent direction, we look for $\eta_k \in (0,1]$ such that
\[
    F_j\bigl(x^k + \eta_k\, d(x^k)\bigr) 
    \leq 
    F_j(x^k) + \zeta\, \eta_k\, \theta(x^k)
    \qquad \forall j \in \mathcal{J},
\]
where $\zeta = 10^{-4}$ and $\theta(x^k)$ denotes the optimal value of the corresponding subproblem (that is, $\theta_{pg}$ for ProxGrad and $\theta_{cg}$ for CondG). After the line search, the new iterate is given by $x^{k+1} = x^k + \eta_k\, d(x^k)$.

In our tests we use, for each method, the stopping criterion proposed in the corresponding reference. For ProxGrad, the algorithm is designed to stop when $\bar{x}^{k}_{pg} = x^k$, which is equivalent to $\theta_{pg}(x^k) = 0$. For CondG, the authors require $\theta_{cg}(x^k) = 0$. Accordingly, in the numerical comparison we declare success for ProxGrad and CondG whenever the associated optimal value satisfies $|\theta(x^k)| < \varepsilon$. For PDFPM, see Step~3 of Algorithm~\ref{algfree}, the stopping criterion is $\sigma_k \| \bar{x}^k - x^k \| < \varepsilon$, where $\bar{x}^k$ is the solution of the subproblem~\eqref{probminmax}. We set $\varepsilon = 10^{-4}$ for all methods and use $\ell = 1$ in \eqref{ProxGrad_subprob}. The details of the parameter choices for PDFPM were given at the beginning of this section; see Subsection~\ref{sub_sec_numerical}. In all experiments reported in this subsection, the maximum number of iterations for each algorithm is set to $100$, and a run is declared unsuccessful whenever no point satisfying the corresponding stopping criterion is obtained within this limit.

The test problems are listed in Table~\ref{tab:problems} and are frequently used to validate multiobjective optimization methods, see for instance \cite{Assuncao2021, Prudente2022} and the references therein. As in Subsection~\ref{holderset}, we consider different uncertainty levels in the minimization of the functions in Table~\ref{tab:problems}, namely
\( \delta \in \{0.0, 0.02, 0.05, 0.1\}.\)
For each fixed value of $\delta$, we generate $m$ matrices $A_j \in \R^{2n \times n}$, with $j \in \mathcal{J}$, thus defining the robust problem~\eqref{robust:prob}, as discussed previously.
 For each problem instance, the algorithm under consideration is executed $200$ times with randomly selected starting points. The same collection of $200$ starting points is used for PDFPM, ProxGrad, and CondG to ensure a fair comparison.

\begin{table}[ht]
\caption{List of test problems used in the comparison.}
\label{tab:problems}
\scriptsize

\centering
\begin{tabular}{@{}c@{\hspace{0.02\textwidth}}c@{}}
\rowcolors{2}{}{lightgray}
\begin{tabular}{|l|l|c|c|}
\hline
\rowcolor[gray]{.90}
Problem & Source & $n$ & $m$  \\
\hline
AP1&  \cite{ansary} &     2 &  3 \\
 AP2&  \cite{ansary} &     1 &  2 \\
 AP3&  \cite{ansary} &     2 &  2 \\
 AP4&  \cite{ansary}  &     3 &  3 \\
 BK1&  \cite{reviewproblems}  &  2 &  2  \\ 
 DD1&  \cite{dd1} &  5 &  2 \\
 DGO1&  \cite{reviewproblems}  &  1 &  2 \\ 
 DGO2&  \cite{reviewproblems}  &  1 &  2  \\ 
 DTLZ1& \cite{Deb2005} & 7 & 3 \\
 DTLZ2& \cite{Deb2005} & 7 & 3 \\
 DTLZ3& \cite{Deb2005} & 7 & 3 \\
 DTLZ4& \cite{Deb2005} & 7 & 3 \\
 DTLZ5& \cite{Deb2005} & 9 & 5 \\
 FA1&  \cite{reviewproblems}  &  3 &  3 \\ 
 Far1&  \cite{reviewproblems}  &  2 &  2 \\ 
 FDS &  \cite{fliegeanddrummond2009}  & 5 & 3  \\  
 FF1 & \cite{reviewproblems} &     2 &  2 \\ 
 Hil1&  \cite{hill} &     2 &  2 \\ 
 IKK1&  \cite{reviewproblems}  &  2 &  3 \\ 
 IM1&  \cite{reviewproblems}  &  2 &  2 \\ 
 JOS1&  \cite{10.5555/2955239.2955427} &  100 &  2 \\
 JOS4&  \cite{10.5555/2955239.2955427}  &  100 &  2 \\
 KW2 & \cite{kw2} &  2 &  2 \\ 
 LE1&  \cite{reviewproblems}  &  2 &  2 \\ 
 Lov1& \cite{doi:10.1137/100784746} & 2 & 2 \\
 Lov2& \cite{doi:10.1137/100784746} & 2 & 2 \\
 Lov3& \cite{doi:10.1137/100784746} & 2 & 2 \\
 Lov4& \cite{doi:10.1137/100784746} & 2 & 2 \\
 Lov5& \cite{doi:10.1137/100784746} & 3 & 2 \\
 Lov6& \cite{doi:10.1137/100784746} & 6 & 2 \\
 LTDZ&  \cite{laumanns2002combining} &  3 &  3 \\ 
 MGH9& \cite{moretest}  &   3 &  15 \\  
 MGH16& \cite{moretest}  &   4 &  5  \\  
 MGH26 & \cite{moretest} &    4 &  4 \\ 
 MGH33 & \cite{moretest} &    10 &  10 \\ 
\hline
\end{tabular}
&
\rowcolors{2}{}{lightgray}
\begin{tabular}{|l|l|c|c|}
\hline
\rowcolor[gray]{.90}
Problem & Source & $n$ & $m$  \\
\hline
MHHM1 & \cite{reviewproblems} & 1 & 3 \\
 MHHM2&  \cite{reviewproblems}  &  2 &  3  \\ 
 MLF1&  \cite{reviewproblems}  &    1 &  2  \\ 
MLF2&  \cite{reviewproblems}  &    2 &  2 \\
 MMR1  & \cite{italianos} &     2 &  2 \\ 
 MMR2  & \cite{italianos} &     2 &  2 \\ 
 MMR3  & \cite{italianos} &     2 &  2  \\ 
 MMR4  & \cite{italianos} &     3 &  2  \\ 
 MOP2 &  \cite{reviewproblems} &     2 &  2 \\ 
 MOP3 & \cite{reviewproblems} &     2 &  2  \\ 
 MOP5&  \cite{reviewproblems} &     2 &  3 \\ 
 MOP6&  \cite{reviewproblems} &     2 &  2 \\ 
 MOP7&  \cite{reviewproblems} &     2 &  3 \\ 
 PNR&  \cite{pnr} &  2 &  2 \\ 
 QV1&  \cite{reviewproblems}  &  10 &  2 \\ 
 SD&  \cite{doi:10.2514/5.9781600866234.0209.0249} &  4 &  2 \\ 
 SK1 & \cite{reviewproblems} &      1 &  2 \\ 
 SK2 & \cite{reviewproblems} &      4 &  2 \\ 
 SLCDT1&  \cite{slcdt} &     2 &  2 \\ 
 SLCDT2&  \cite{slcdt} &   10 &  3  \\ 
 SP1&  \cite{reviewproblems} &     2 &  2  \\ 
 SSFYY2&  \cite{reviewproblems} &     1 &  2 \\ 
TKLY1&  \cite{reviewproblems} &     4 &  2  \\ 
 Toi4&  \cite{tointtest} &  4 &  2 \\ 
 Toi8&  \cite{tointtest} &  4 &  3   \\ 
 Toi9 & \cite{tointtest}  & 4   & 4   \\ 
 Toi10 &\cite{tointtest}  & 4 & 3      \\  
 VU1&  \cite{reviewproblems} &   2 &  2  \\ 
 VU2&  \cite{reviewproblems} &   2 &  2  \\ 
 ZDT1 & \cite{ZDT} & 30 & 2 \\ 
 ZDT2 & \cite{ZDT} & 30 & 2 \\ 
 ZDT3 & \cite{ZDT} & 30 & 2 \\ 
 ZDT4 & \cite{ZDT} & 30 & 2 \\ 
 ZDT6 & \cite{ZDT} & 10 & 2  \\ 
ZLT1&  \cite{reviewproblems} &   10 &  5   \\
\hline
\end{tabular}
\end{tabular}
\end{table}

The numerical results are presented by means of performance profile plots \cite{Dolan2002}, which are a standard tool for comparing several methods on a large collection of test problems. To compare the behavior of each algorithm, we consider all $56{,}000$ instances arising from $70$ problems, $4$ uncertainty levels, and $200$ starting points. Figures~\ref{fig:pp-nf-nj_evals} and \ref{fig:pp-iter-time} display performance profiles for PDFPM, ProxGrad, and CondG with respect to the number of function evaluations, the number of gradient evaluations, the iteration count, and the total execution time. 
The plots show that PDFPM is more efficient and more robust than ProxGrad and CondG when function and gradient evaluations are used as performance measures. When iteration count and execution time are considered, ProxGrad is typically the most efficient method, but PDFPM is the most robust algorithm overall, in the sense that it attains the highest fraction of successfully solved instances across all performance measures.

\begin{figure}[h]
\centering
\begin{minipage}{0.48\textwidth}
\centering
\includegraphics[width=\textwidth]{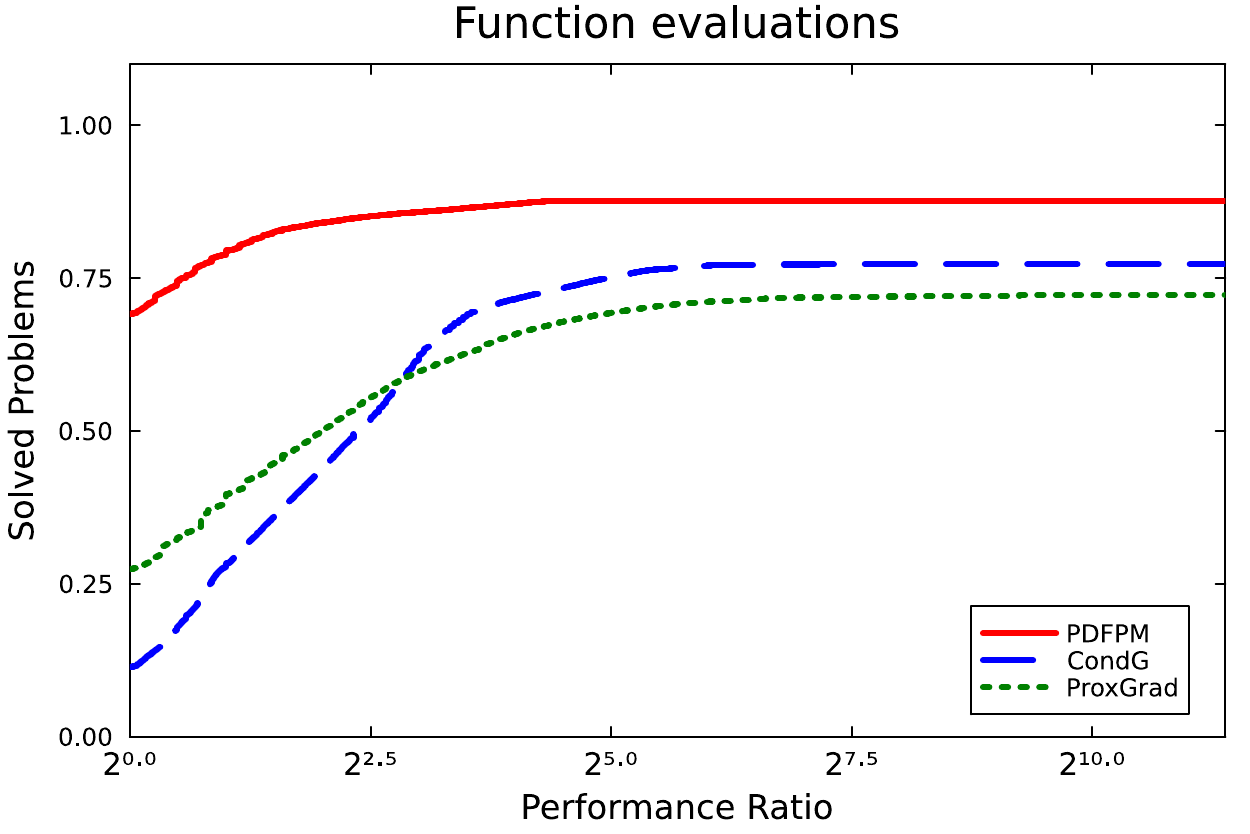}
\end{minipage}
\hfill
\begin{minipage}{0.48\textwidth}
\centering
\includegraphics[width=\textwidth]{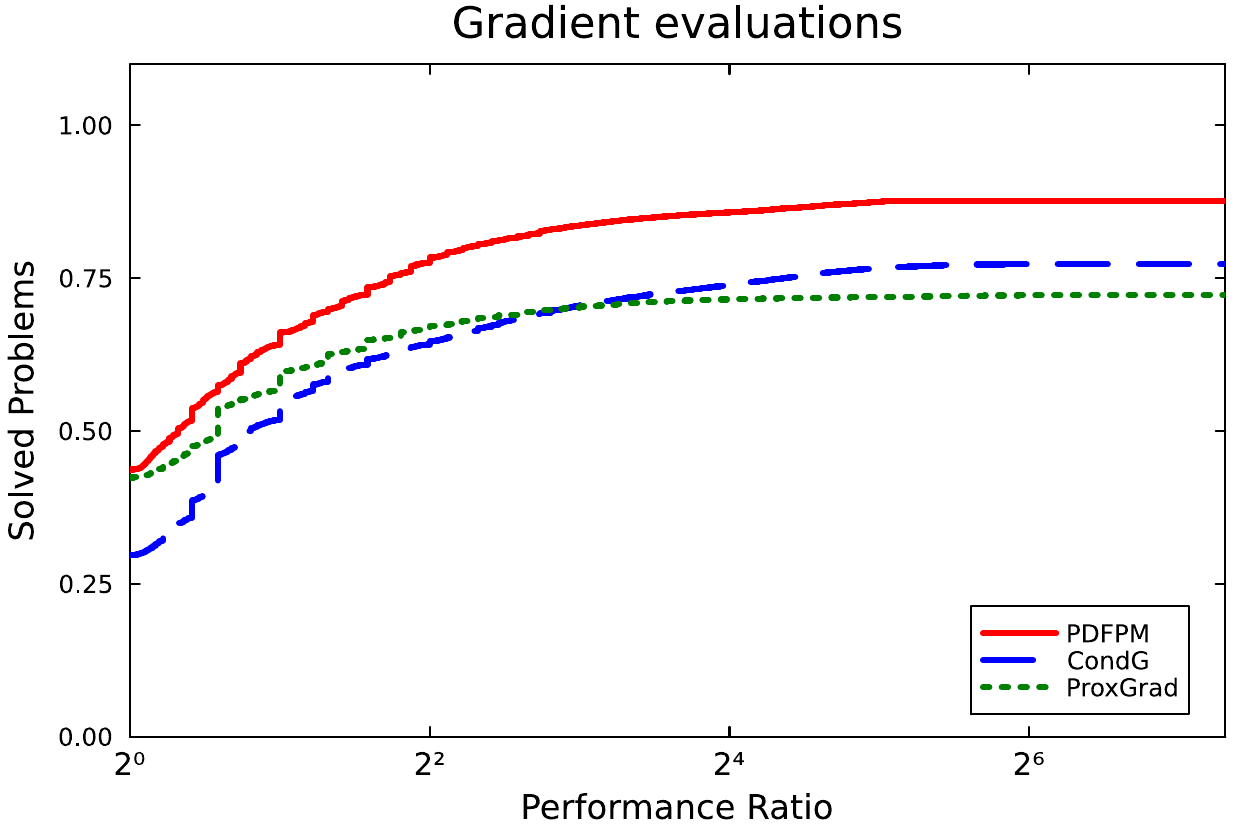}
\end{minipage}
\caption{Performance profiles with respect to the number of function and gradient evaluations.}
\label{fig:pp-nf-nj_evals}
\end{figure}

\begin{figure}[h]
\centering
\begin{minipage}{0.48\textwidth}
\centering
\includegraphics[width=\textwidth]{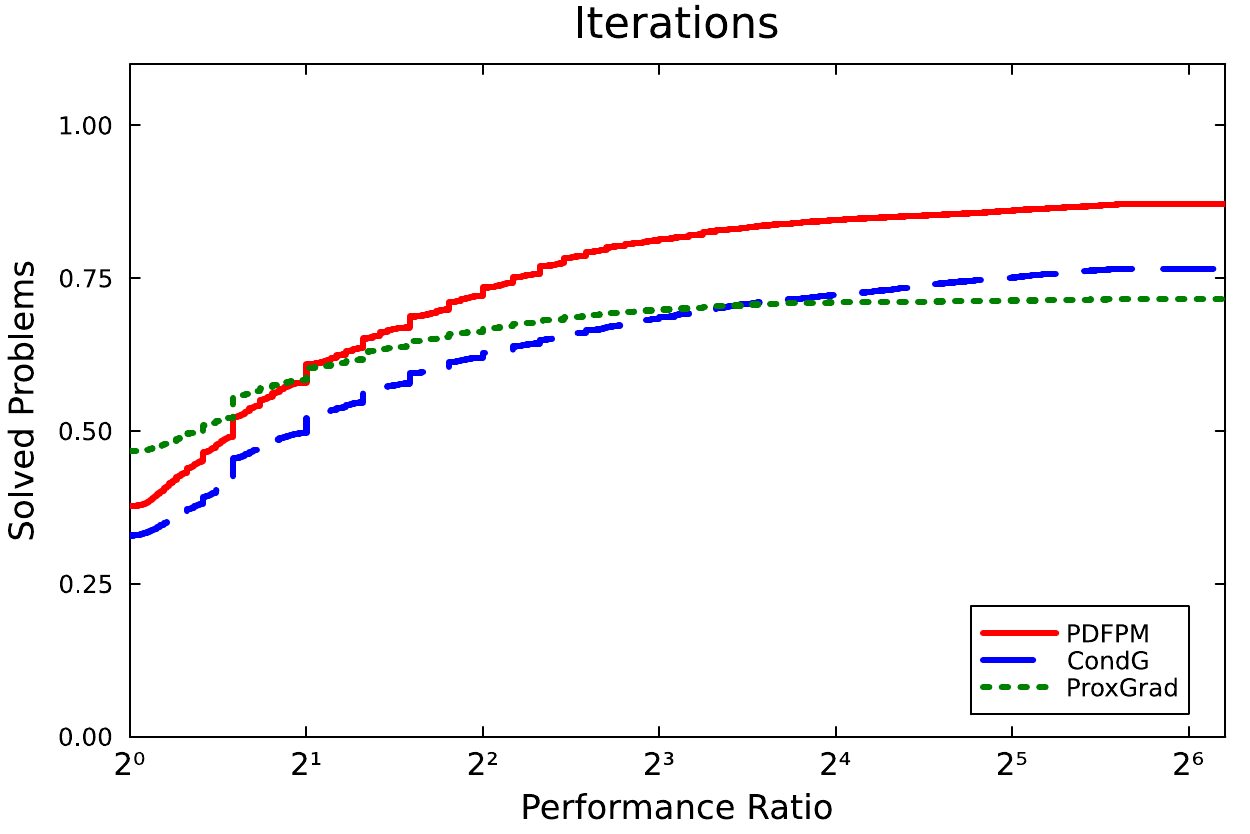}
\end{minipage}
\hfill
\begin{minipage}{0.48\textwidth}
\centering
\includegraphics[width=\textwidth]{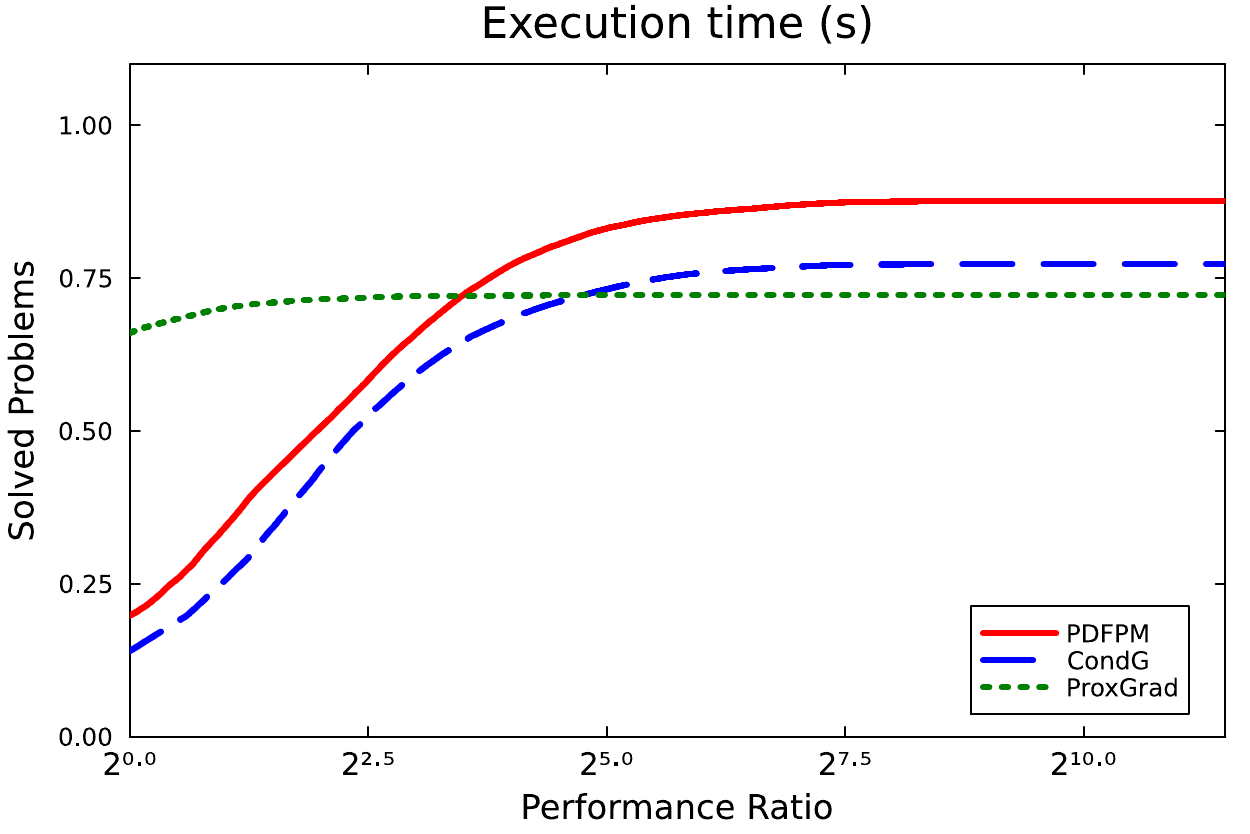}
\end{minipage}
\caption{Performance profiles with respect to the number of iterations and the total execution time.}
\label{fig:pp-iter-time}
\end{figure}

\section{Conclusions}\label{conlusions}
In this work, we addressed a multiobjective optimization problem involving the composition of a differentiable function with a convex one, under the assumption that the gradient of each differentiable component can be decomposed into parts with Lipschitz and H\"{o}lder regularity. This flexible modeling covers both classical cases and mixed scenarios with different regularities across objectives. One of the main contributions of the method is its flexibility in handling approximations of the differentiable part, allowing exact derivatives, first-order approximations (such as finite differences), or second-order information through a matrix $B_j^k$, which may be either the exact Hessian or an approximation obtained via methods such as BFGS, secant Newton, or finite differences, provided that $\{B_k\}$ is a bounded sequence of positive semidefinite matrices. This makes the method applicable to a wide range of problems, including situations where derivative information is only partially or not directly available. Moreover, large values of the regularization parameter $\sigma$ may lead to small steps and slow convergence, so it is advisable to start with a small $\sigma$. The method does not require prior knowledge of the Lipschitz–H\"{o}lder constants, and its complexity analysis ensures convergence to an $\epsilon$-approximate solution in a finite number of iterations, with order $\mathcal{O}(\epsilon^{-\frac{\beta+1}{\beta}})$, consistent with the best-known complexity results for this type of regularity. 

The numerical experiments focused on robust multiobjective optimization problems in the H\"older setting. In the biobjective examples AAS1 and AAS2, we observed that, as the level of uncertainty increases, the Pareto fronts approximated by PDFPM move away from the nominal front corresponding to the case without uncertainty, as expected from the robust formulation. At the same time, PDFPM maintained very high success rates across all uncertainty levels, including the more challenging instances with H\"older continuous gradients, which suggests that the method handles uncertainty in a stable way on these problems. In a larger benchmark study involving standard multiobjective test problems and several uncertainty levels, a performance-profile comparison with two first-order composite methods indicated that PDFPM is particularly robust in terms of the fraction of successfully solved instances, while remaining competitive with respect to the usual performance measures.  

We developed a partially derivative-free method, and for future work, it would be interesting to consider a fully derivative-free algorithm. Additionally, other situations could be explored, such as vector environments and closed pointed convex cones.

\bmhead{Acknowledgements} This work has been partially supported by CNPq grant 407147/2023-3 (V. S. Amaral).

\bmhead{Data availability} We do not analyze or generate any datasets, as our work proceeds within a theoretical approach.
\subsection*{Declarations}
The authors have no conflicts of interest to declare that are relevant to the content of 
this article.

\bibliography{sn-bibliography}
\end{document}